\newcommand{\si}{\sigma}
\newcommand{\p}[2]{\phi_{{#1}^{-1}}({#2})}
\newcommand{\pn}[2]{\phi_{{#1}}({#2})}
\newcommand{\A}{A(R; S)}
\newtheorem{thm}{Theorem}[section]
\newtheorem{prop}[thm]{Proposition}
\newtheorem{lem}[thm]{Lemma}
\newtheorem{cor}[thm]{Corollary}
\theoremstyle{definition}
\newtheorem{definition}[thm]{Definition}
\newtheorem{ex}[thm]{Example}
\theoremstyle{remark}
\newtheorem{rem}[thm]{Remark}
\numberwithin{equation}{section}
\begin{document}
\title{ Finiteness conditions of $S$-Cohn-Jordan Extensions}
\author{  \bf Jerzy Matczuk\footnote{ The research was
 supported by Polish MNiSW grant No. N N201 268435} \\
 Institute of Mathematics, Warsaw University,\\
  Banacha 2, 02-097 Warsaw, Poland\\
  e.mail: jmatczuk@mimuw.edu.pl}
\date{ }
\maketitle
\begin{abstract}
 Let a  monoid $S$ act  on   a ring $R$ by injective endomorphisms
and $\A$ denote the $S$-Cohn-Jordan extension of $R$. Some
results relating finiteness conditions  of $R$ and that of $\A$ are presented.
In particular necessary and sufficient  conditions for  $\A$  to be left noetherian, to be left B\'ezout  and to be left principal ideal ring  are presented.
This also offers a solution to  Problem 10 from \cite{jm3}.
\end{abstract}
Throughout the paper $R$ stands for an associative ring with
 unity and $\phi$  denotes an action of a   multiplicative monoid  $S$  on a ring $R$ by injective
 endomorphisms.
   By this we mean that  a homomorphism  $\phi\colon S\rightarrow \mbox{End}(R)$ is
   given, such that $\phi(s)$ is an injective endomorphism of $R$, for any $s\in S$.
    It is assumed that all endomorphisms of $R$ preserve unity.

We say that an over-ring $\A$ of $R$ is an $S$-Cohn-Jordan
extension of $R$ if it is a minimal  over-ring of $R$ such that
the action of $S$ on $R$ extends to the action of $S$ on $\A$ by
automorphisms (Cf. Definition \ref{ext}).
    A classical result of  Cohn (see  Theorem 7.3.4 \cite{Cohn}) says that if the
    monoid $S$ possesses a group $S^{-1}S$ of left quotients, then
     $\A$ exists, moreover it is uniquely determined up to an
     $R$-isomorphism.
    The above mentioned theorem of Cohn was originally  formulated
    in much more general context of
    $\Omega$-algebras, not just rings. The construction of $\A$ was  given as a
    limit of a suitable directed system.
    The possibility of enlarging an object and replacing the action
    of
    endomorphisms by the action of automorphisms is a
    powerful tool, similar to  a localization.
       This is indeed the case. One can see
    \cite{L}, \cite{LM}, \cite{Mu2},  \cite{Pa},
    \cite{Pa2}, \cite{P} for examples of such applications in various
    algebraic contexts.

Jordan in \cite{Jo}   began systematic
studies of  relations between various algebraic properties of a ring $R$
and that of $\A$ in the case $S=\langle\si\rangle$ is a monoid generated by an injective endomorphism $\si$ of $R$. Then Matczuk in \cite{jm2} started such investigations in the case  $S$ is an arbitrary monoid acting, by injective endomorphisms,  on a ring $R$.  This paper can be considered as a continuation of works carried out  in  \cite{jm2}. One can also consult \cite{jm2} and  references inside for other motivations and examples of applications of $S$-Cohn-Jordan extensions.

The aim of this paper is to continue investigation of
 some finiteness  conditions of the
$S$-Cohn-Jordan extension $\A$ of $R$ in terms of properties of  $R$ and the action of $S$.   The basic idea,
which   goes back to Jordan \cite{Jo},  is to compare left ideals
$I$ of $\A$ with its  orbits    $\{  \phi(s)(I)\cap R \mid s \in
S\}$ in $R$.

The paper is organised as follows: In a short Section  1 we present technicalities needed in the next section. Those   generalize and rework
 Theorem 2.10 of \cite{jm2} which gives a correspondence between left ideals of $\A$ and certain admissible sets of left ideals of  $R$. In the same time it    simplifies considerations from \cite{jm2} and makes the paper self contained.

In Section 2 we give necessary and sufficient conditions for $\A$ to be noetherian (Theorem  \ref{thm noetherian}), left principal ideal ring (Theorem  \ref{A is PLIR}) and to be left B\'ezout ring (Proposition \ref{prop. Bezout}). Some applications and examples are presented. In particular it
appears that $\A$ is always left B\'ezout provided $R$ is such. The behaviour of the noetherian property is much more complicated. Even when $S$ is a cyclic monoid,
  one can find examples of rings $R$ and $\A$ showing that
 one of those rings is left noetherian but the other is not.

Theorem  \ref{thm noetherian} gives an  answer to Problem 10 posed in \cite{jm3}. The characterization presented   in the statement (2) of this theorem is  a generalization of the one obtained by Jordan in \cite{Jo},  in the case when $S=\langle\si\rangle$, where $\si$ is an injective endomorphism of $R$. However the ideas for his proof are different from ours.

\section{Preliminaries}Henceforth    $R$ stands for an associative   unital ring and $S$ denotes  a monoid  which possesses a group $S^{-1}S$ of left quotients.
Recall that this is the case exactly when  the monoid $S$ is left
and right cancellative and satisfies the left Ore condition. That
is, for any $s_1,s_2\in S$, there exist $t_1, t_2\in S$ such that
$t_1s_1=t_2s_2$.

Let  $\phi\colon
S\rightarrow \mbox{End}(R)$ denote  the action of  $S$ on
$R$ by injective endomorphisms.  For any $s\in S$, the
endomorphism $\phi(s)\in\mbox{End}(R)$ will be denoted by
$\phi_s$.
\begin{definition}
\label{ext}
An over-ring $\A$ of $R$ is called an $S$-Cohn-Jordan extension ($CJ$-extension, for short) of
$R$ if:
\begin{enumerate}
  \item  the action of $S$ on $R$ extends to an action of $S$ (also denoted by $\phi$) on
  $\A$ by automorphisms, i.e. $\phi_s$ is an automorphism of $\A$, for any $s\in S$.
  \item every element $a\in \A$ is of the form  $a=\phi_s^{-1}(b)$, for some
  suitable $b\in R$ and    $s\in S$.
\end{enumerate}
\end{definition}

As it was mentioned in the introduction, the $CJ$-extension $\A$ exists
and is uniquely defined up to an $R$-isomorphism (see also \cite{jm2}).

Hereafter,  as in the above definition, $\phi_s$ will also denote
the automorphism $\phi(s)$ of $\A$ and $\phi_{s^{-1}}$ will stand for its inverse $(\phi_s)^{-1}$,  where  $s\in S$. In particular, the  preimage in $R$ of a subset $X$ of $R$ under the action of $s\in S$ is equal to  $\phi_{s^{-1}}(X)\cap R$.

\begin{definition}\label{def. admisible}
A set $\{X_s\}_{s\in S}$ of subsets   of $R$ is called
$S$-admissible if,  for any $k,s\in S$, we have $R\cap
\phi_{s^{-1}}(X_{sk})=X_k$.  For such a set let $\Delta(\{X_s\}_{s\in
S})=\bigcup_{s\in S}\p{s}{X_s}\subseteq \A$.
 \end{definition}
 \begin{rem}\label{remark}
Let $\{X_s\}_{s\in S}$ be an $S$-admissible set. Then
$\pn{s}{X_k}\subseteq X_{sk}$, for any $k,s\in S$. Indeed
$\pn{s}{X_k}=\pn{s}{R\cap \phi_{s^{-1}}(X_{sk})}\subseteq
\pn{s}{R}\cap X_{sk}\subseteq X_{sk}$.
 \end{rem}
 \begin{lem} \label{X gives admisible}
Let $X$ be a subset of $\A$ and $\Gamma(X)=\{X_s=\phi_s(X)\cap
R\}_{s\in S}$. Then  $\{X_s \}_{s\in S}$ is an $S$-admissible set
of subsets  of $R$ and $X=\bigcup_{s\in S}\p{s}{X_s}$, i.e.
$\Delta\Gamma(X)=X$.
\end{lem}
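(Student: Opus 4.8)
The plan is to verify the two assertions separately: first that the family $\Gamma(X)=\{X_s\}_{s\in S}$ is $S$-admissible, and then that $\Delta\Gamma(X)=\bigcup_{s\in S}\p{s}{X_s}$ recovers $X$. Throughout I would exploit three basic facts. Since $\phi\colon S\rightarrow\mbox{End}(R)$ is a monoid homomorphism, $\phi_{sk}=\phi_s\phi_k$, and because each $\phi_s$ is an automorphism of $\A$ its inverse $\phi_{s^{-1}}$ is a bijection of $\A$ that commutes with intersections and satisfies $\phi_{s^{-1}}\phi_{sk}=\phi_k$. Finally, since $\phi_s$ restricts to an endomorphism of $R$ we have $\pn{s}{R}\subseteq R$, and applying the automorphism $\phi_{s^{-1}}$ to this inclusion yields the key containment $R\subseteq\p{s}{R}$ for every $s\in S$.

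For admissibility I must show $R\cap\phi_{s^{-1}}(X_{sk})=X_k$ for all $k,s\in S$. Expanding $X_{sk}=\pn{sk}{X}\cap R$ and using that $\phi_{s^{-1}}$ is a bijection commuting with intersection, I would compute $\phi_{s^{-1}}(X_{sk})=\phi_{s^{-1}}\phi_{sk}(X)\cap\p{s}{R}=\pn{k}{X}\cap\p{s}{R}$. Intersecting with $R$ then gives $R\cap\pn{k}{X}\cap\p{s}{R}$, and here the containment $R\subseteq\p{s}{R}$ lets me drop the last factor, leaving exactly $\pn{k}{X}\cap R=X_k$. This is the step I expect to be the only real point of substance, as it is precisely where the interplay between $R$ and its enlargement $\A$ --- encoded in $R\subseteq\p{s}{R}$ --- is used.

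For the identity $\Delta\Gamma(X)=X$, the inclusion $\supseteq$ is immediate: $\p{s}{X_s}=\phi_{s^{-1}}(\pn{s}{X}\cap R)\subseteq\phi_{s^{-1}}\pn{s}{X}=X$, so the whole union lies in $X$. For $\subseteq$ I would take $a\in X$ and invoke Definition \ref{ext}(2) to write $a=\p{s}{b}$ with $b\in R$ and $s\in S$; then $\pn{s}{a}=b\in R$ and, since $a\in X$, also $b=\pn{s}{a}\in\pn{s}{X}$, whence $b\in\pn{s}{X}\cap R=X_s$ and $a=\p{s}{b}\in\p{s}{X_s}$. Combining the two inclusions finishes the proof. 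The whole argument is essentially formal manipulation of the automorphisms $\phi_s$ on $\A$; no serious obstacle arises beyond keeping careful track of the containment $R\subseteq\p{s}{R}$ noted above.
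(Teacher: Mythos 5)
Your proposal is correct and follows essentially the same route as the paper: the admissibility check is the identical computation $R\cap\p{s}{X_{sk}}=R\cap\pn{k}{X}\cap\p{s}{R}=X_k$ hinging on the containment $R\subseteq\p{s}{R}$, and the equality $\Delta\Gamma(X)=X$ is established by the same two inclusions (Definition \ref{ext}(2) for $\subseteq$, and bijectivity of $\phi_s$ on $\A$ for $\supseteq$). The only difference is presentational: you spell out the steps the paper leaves implicit.
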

\begin{proof} Let  $s, k\in S$.  Notice that
 $R\cap\p{s}{X_{sk}}=R\cap\p{s}{\pn{sk}{X}\cap R}=R\cap \pn{k}{X}\cap \p{s}{R}=R\cap
 \pn{k}{X}=X_k$,
 as $R\subseteq \p{s}{R}$. This shows that $\{X_s \}_{s\in S}$ is an $S$-admissible
 set.

 The inclusion $X\subseteq \bigcup_{s\in S}\p{s}{X_s}$ is a consequence of the fact that
  for any $x\in X$, there is $s\in S$ such that $\pn{s}{x}\in R$. The reverse inclusion holds, since $\phi_s$ is monic, for every $s\in
 S$.
\end{proof}
Notice that the set of all $S$-admissible sets  has a
natural partial ordering given by
$$\{X_s\}_{s\in S}\leq \{Y_s\}_{s\in S} \;\mbox{ if and only if }X_s\subseteq Y_s,\;\mbox{for all } s\in S.$$
\begin{prop}\label{thm iso}
There is an  order-preserving one-to-one correspondence between
the set $\mathcal{L}$ of all subsets  of $\A$ ordered by inclusion
and the partially ordered set $\mathcal{R}$ of all $S$-admissible
sets of subsets of $R$. The correspondence is given by   maps
$\Delta$ and $\Gamma$ defined above.
\end{prop}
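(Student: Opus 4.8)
The plan is to verify that $\Delta$ and $\Gamma$ are mutually inverse, order-preserving maps between $\mathcal{L}$ and $\mathcal{R}$; an order isomorphism then follows at once. Half of the inverse relation is already at hand: Lemma \ref{X gives admisible} shows that $\Gamma$ sends a subset of $\A$ to an $S$-admissible set (so $\Gamma\colon\mathcal{L}\to\mathcal{R}$ is well defined) and that $\Delta\Gamma(X)=X$ for every $X\in\mathcal{L}$; moreover $\Delta(\{X_s\}_{s\in S})$ is by construction a subset of $\A$, so $\Delta\colon\mathcal{R}\to\mathcal{L}$ is well defined too. Monotonicity of both maps is immediate and I would dispose of it first: if $X\subseteq X'$ then $\pn{s}{X}\cap R\subseteq\pn{s}{X'}\cap R$ for every $s$, giving $\Gamma(X)\leq\Gamma(X')$, and if $\{X_s\}_{s\in S}\leq\{Y_s\}_{s\in S}$ then $\p{s}{X_s}\subseteq\p{s}{Y_s}$ for each $s$, so passing to unions gives $\Delta(\{X_s\}_{s\in S})\subseteq\Delta(\{Y_s\}_{s\in S})$. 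Thus the only remaining point is the identity $\Gamma\Delta=\mathrm{id}_{\mathcal{R}}$.

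To prove it, fix an arbitrary $S$-admissible set $\{X_s\}_{s\in S}$, put $X=\Delta(\{X_s\}_{s\in S})=\bigcup_{t\in S}\p{t}{X_t}$, and let $\{Y_s\}_{s\in S}=\Gamma(X)$, so that $Y_s=\pn{s}{X}\cap R$; the goal is $Y_s=X_s$ for every $s\in S$. The inclusion $X_s\subseteq Y_s$ is easy: the term $t=s$ of the defining union gives $\p{s}{X_s}\subseteq X$, hence $X_s=\pn{s}{\p{s}{X_s}}\subseteq\pn{s}{X}$, and since $X_s\subseteq R$ we conclude $X_s\subseteq\pn{s}{X}\cap R=Y_s$.

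The reverse inclusion $Y_s\subseteq X_s$ is where I expect the main difficulty, for it is precisely here that the left Ore condition must be invoked to reconcile the different ``denominators'' produced by the union. Let $y\in Y_s$, so $y\in R$ and $y=\pn{s}{\p{t}{z}}$ for some $t\in S$ and $z\in X_t$. Applying the left Ore condition to the pair $s,t$, choose $p,q\in S$ with $ps=qt$; since the extended action $\phi$ is a monoid homomorphism $S\to\mbox{Aut}(\A)$ by Definition \ref{ext}, this gives $\phi_p\phi_s=\phi_q\phi_t$ on $\A$. Evaluating at $\p{t}{z}$ and using that $\phi_t\phi_{t^{-1}}$ is the identity yields $\pn{p}{y}=\pn{q}{z}$, and Remark \ref{remark} shows $\pn{q}{z}\in\pn{q}{X_t}\subseteq X_{qt}=X_{ps}$. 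Hence $\pn{p}{y}\in X_{ps}$, while $y\in R$, so $y=\p{p}{\pn{p}{y}}\in R\cap\p{p}{X_{ps}}$; the $S$-admissibility relation of Definition \ref{def. admisible}, taken with its ``$s$'' equal to $p$ and its ``$k$'' equal to $s$, identifies this set as $X_s$. Therefore $y\in X_s$, completing the reverse inclusion and hence the whole correspondence.
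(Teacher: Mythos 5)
Your proof is correct and follows essentially the same route as the paper: both reduce everything to the identity $\Gamma\Delta=\mathrm{id}_{\mathcal{R}}$ (the rest coming from Lemma \ref{X gives admisible} and trivial monotonicity), and both establish the nontrivial inclusion by the same three-step argument --- left Ore condition to find a common multiple, Remark \ref{remark} to push the element into $X_{ps}$, and the $S$-admissibility relation to pull it back into $X_s$. The only difference is notational (your $p,q,s,t$ versus the paper's $t,l,k,s$) and that you spell out the easy inclusion which the paper states without proof.
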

\begin{proof} By Lemma  \ref{X gives admisible}, the maps $\Delta$ and $\Gamma$ are well-defined and satisfy $\Delta\Gamma=\mbox{id}_{\mathcal{L}}$. Clearly both maps  preserve the ordering.

Let $\{X_k\}_{k\in S}$ be  an $S$-admissible set of subsets of
$R$. Then
\begin{equation}\label{eq inclusion}
 \{X_k\}_{k\in S}\leq
\Gamma\Delta(\{X_k\}_{k\in S})=\{Y_k\}_{k\in S}
\end{equation}
$\mbox{where}\;\;Y_k=R\cap \pn{k}{\bigcup_{s\in S} \pn{s^{-1}}{X_s}}=\bigcup_{s\in S} \pn{ks^{-1}}{X_s} $.
 Let $a \in Y_k$. Then there are $s\in S$ and
$b\in X_s$ such that $a=\pn{ks^{-1}}{b}$. Since $S$ satisfies the
left Ore condition, we can pick $t,l\in S$  such that $tk=ls$.
Hence $a=\pn{t^{-1}l}{b}$ and $\pn{t}{a}=\pn{l}{b}\in
\pn{l}{X_s}\subseteq X_{ls}=X_{tk}$, where the last inclusion is
given by Remark \ref{remark}. Therefore  we obtain $a\in R\cap
\p{t}{X_{tk}}=X_{k}$, as $\{X_s\}_{s\in S}$ is an $S$-admissible
set. This shows that $Y_k\subseteq X_k$, for any $k\in S$.  This
together with (\ref{eq inclusion})  yield that $\{X_k\}_{k\in S}=
\Gamma\Delta(\{X_k\}_{k\in S})$ and complete  the proof of the
proposition.
\end{proof}

\begin{prop}\label{extensions inside}
 Let $A$ be an over-ring of $R$ such that the
  action of $S$ on $R$ extends to the action of $S$ on $A$ by automorphisms.
  Then $B=\bigcup_{s\in S}\p{s}{R}$ is a $CJ$-extension of
  $R$.
\end{prop}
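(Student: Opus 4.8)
The plan is to verify directly the two conditions of Definition~\ref{ext} for the ring $B=\bigcup_{s\in S}\p{s}{R}$. Condition~(2) is essentially free: every $a\in B$ lies in some $\p{s}{R}$, so $a=\phi_{s^{-1}}(b)$ for a suitable $b\in R$. What remains is to show that $B$ is an over-ring of $R$ and that each $\phi_s$ restricts to an automorphism of $B$ extending the given action on $R$.

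First I would record the monotonicity of the family $\{\p{s}{R}\}_{s\in S}$. Since $\phi_s$ restricts to an endomorphism of $R$ we have $\pn{s}{R}\subseteq R$, and applying $\phi_{s^{-1}}$ yields $R\subseteq\p{s}{R}$; in particular $R\subseteq B$ and $1\in B$. More generally, for $u\in S$ one has $\phi_{(us)^{-1}}(R)=\phi_{s^{-1}}(\phi_{u^{-1}}(R))\supseteq\p{s}{R}$, so enlarging the index on the left enlarges the set. Now, given $s_1,s_2\in S$, the left Ore condition provides $t_1,t_2\in S$ with $t_1s_1=t_2s_2=:t$, whence both $\p{s_1}{R}$ and $\p{s_2}{R}$ are contained in $\p{t}{R}$. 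Thus the family is upward directed. As each $\p{s}{R}$ is a unital subring of $A$ (being the image of the subring $R$ under the automorphism $\phi_{s^{-1}}$ of $A$), and a directed union of subrings is a subring, it follows that $B$ is an over-ring of $R$.

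Next I would prove $\phi_s(B)=B$ for every $s\in S$. The inclusion $B\subseteq\phi_s(B)$ is easy: for $k\in S$ one has $\p{k}{R}=\phi_s(\phi_{(ks)^{-1}}(R))\subseteq\phi_s(B)$. For the reverse inclusion take $x=\phi_{k^{-1}}(r)\in B$ with $r\in R$, so that $\phi_s(x)=\phi_{sk^{-1}}(r)$. Here the crucial move is to rewrite the mixed quotient $sk^{-1}$ via the left Ore condition: choosing $t,u\in S$ with $ts=uk$ gives $sk^{-1}=t^{-1}u$, and hence $\phi_s(x)=\phi_{t^{-1}}(\phi_u(r))\in\p{t}{R}\subseteq B$, since $\phi_u(r)\in R$. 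Therefore $\phi_s(B)\subseteq B$, and equality holds.

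Finally, because $\phi_s$ is injective (it restricts the automorphism $\phi_s$ of $A$), the equality $\phi_s(B)=B$ shows that $\phi_s$ restricts to an automorphism of $B$; these restrictions agree with the given action on $R$ and compose correctly because they do so on $A$, so they furnish an action of $S$ on $B$ by automorphisms. This establishes condition~(1), completing the proof. I expect the only delicate step to be the reverse inclusion $\phi_s(B)\subseteq B$, where the left Ore condition is exactly what is needed to convert $sk^{-1}$ into a quotient of the form $t^{-1}u$; everything else is routine bookkeeping with the directed union.
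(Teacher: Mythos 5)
Your proof is correct and follows essentially the same route as the paper: both establish that $B$ is a subring by using the left Ore condition to place any two elements in a common $\p{w}{R}$ (your directed-union phrasing is just a repackaging of this), and both prove $\phi_s(B)=B$ by the same key step of rewriting $sk^{-1}$ as $t^{-1}u$ via the Ore condition. No gaps; the only difference is cosmetic organization.
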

\begin{proof}
Let $a,b\in
 B$ and $k,l\in S$ be such that $\pn{k}{a}, \pn{l}{b}\in R$. Since $S$ satisfies the
left Ore condition, there are $s,t, w\in S$ such that $sk=tl=w$.
 Then $\pn{w}{a}=\pn{sk}{a}, \pn{w}{b}= \pn{tl}{b}\in R$. This
 implies that $a-b, ab\in \p{w}{R}\subseteq B$
  and shows that $B$ is a subring of $A$.

  By definition of $B$,  $\p{k}{B}\subseteq B$ and $B\subseteq \pn{k}{B}$ follows, for any $k\in S$.
  The left Ore condition implies for any $k,s\in S$
  we can find $l,t\in S$ such that $ks^{-1}=t^{-1}l$. Then
  $\pn{k}{\p{s}{R}}=\p{t}{\pn{l}{R}}\subseteq \p{t}{R}$. This
  means that also  $\pn{k}{B}\subseteq B$, for $k\in S$. Now it is easy to complete the proof.
\end{proof}
We will say that a subset $X$ of $\A$ is $S$-invariant if
$\pn{s}{X}\subseteq X$, for all $s\in S$.

Direct application of Proposition \ref{extensions inside} gives
the following:
\begin{cor}\label{cor JC ext construction}
Let $T$ be an $S$-invariant subring of $R$. Then $\bigcup_{s\in
S}\p{s}{T}\subseteq \A$ is a $CJ$-extension of $T$.
\end{cor}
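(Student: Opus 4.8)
The plan is to derive this as a direct application of Proposition \ref{extensions inside}, with the subring $T$ taking over the role of $R$ and the $CJ$-extension $\A$ taking over the role of the over-ring $A$. Before that can be done, two hypotheses must be checked: that $\phi$ genuinely restricts to an action of $S$ on $T$ by injective (unital) endomorphisms, and that this restricted action extends to an action of $S$ on $\A$ by automorphisms.

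First I would verify that $\phi$ restricts to $T$. The $S$-invariance of $T$ means precisely that $\pn{s}{T}\subseteq T$ for every $s\in S$, so each $\phi_s$ carries $T$ into itself and the restriction $\phi_s|_T$ is an endomorphism of $T$. It is injective because $\phi_s$ is injective on $R\supseteq T$, and it preserves the unity since $T$ is a unital subring of $R$ and $\phi_s(1)=1$. The homomorphism identity $\phi_s\phi_t=\phi_{st}$ is inherited on $T$ from its validity on $R$. Hence $\phi|_T\colon S\rightarrow\mbox{End}(T)$ is an action of $S$ on $T$ by injective endomorphisms, so that the notion of a $CJ$-extension of $T$ makes sense.

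Next I would observe that $\A$ is an over-ring of $R$, hence of $T$, and that each $\phi_s$ is, by Definition \ref{ext}, an automorphism of $\A$ whose restriction to $T$ agrees with $\phi_s|_T$. Thus the action of $S$ on $T$ extends to an action of $S$ on $\A$ by automorphisms, which is exactly the hypothesis of Proposition \ref{extensions inside} applied to the pair $(T,\A)$.

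With both hypotheses in place, Proposition \ref{extensions inside} applies verbatim after replacing $R$ by $T$, and yields that $B=\bigcup_{s\in S}\p{s}{T}$ is a $CJ$-extension of $T$, which is the assertion. I do not expect a genuine obstacle here: the substance of the argument lies entirely in the first step, where the $S$-invariance of $T$ is precisely what guarantees $\pn{s}{T}\subseteq T$, and hence that $\phi$ restricts to a legitimate action on $T$; everything afterward is a transcription of Proposition \ref{extensions inside}.
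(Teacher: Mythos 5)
Your proposal is correct and matches the paper exactly: the paper presents this corollary as a ``direct application'' of Proposition \ref{extensions inside}, applied with $T$ in place of $R$ and $\A$ in place of $A$, which is precisely your argument. Your explicit verification that $S$-invariance makes $\phi$ restrict to an action on $T$ by injective unital endomorphisms is the detail the paper leaves implicit, and it is carried out correctly.
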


\begin{prop} \label{submodules}
Let $T$ be an $S$-invariant subring  of $R$
and $B=\bigcup_{s\in S}\p{s}{T}\subseteq \A$.
 Let $X$ be a subset of $\A$ and $\{X_s\}_{s\in S}=\Gamma(X)$.
 Then:
\begin{enumerate}
  \item  $X$ is an additive subgroup (a subring) of $\A$ iff for any $s\in S, $ $X_s$ is an
  additive subgroup (a subring) of $R$.
  \item $X$ is a left (right) $B$-submodule of $\A$ iff for any $s\in S, $ $X_s$ is a
  left (right) $T$-submodule of  $R$.
  \end{enumerate}
\end{prop}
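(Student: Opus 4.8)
The plan is to handle both parts through the concrete description $X=\bigcup_{s\in S}\p{s}{X_s}$ furnished by Lemma~\ref{X gives admisible}, exploiting the bijection of Proposition~\ref{thm iso}. In each case the forward implication is immediate from the fact that $\phi_s$ is a ring automorphism of $\A$ and $R$ is a subring, so that intersecting an image with $R$ preserves the relevant closure property. The reverse implications are where the left Ore condition does the real work: given two elements of $X$ (or a product of an element of $B$ with an element of $X$) having different ``denominators'', I would use the Ore condition to push both into a single piece $X_w\subseteq R$, apply the assumed structure there, and pull back by $\phi_{w^{-1}}$ into $X$.

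For part (1), the direction ``$X$ is a subgroup (subring) $\Rightarrow$ each $X_s$ is'' follows since $X_s=\pn{s}{X}\cap R$ is the intersection of the additive subgroup (subring) $\pn{s}{X}$ of $\A$ with the subring $R$. Conversely, assuming each $X_s$ is a subgroup (subring), I take $a,b\in X$ and pick $s,t\in S$ with $\pn{s}{a}\in X_s$ and $\pn{t}{b}\in X_t$. Choosing $u,v\in S$ with $us=vt=:w$, Remark~\ref{remark} gives $\pn{w}{a}=\pn{u}{\pn{s}{a}}\in\pn{u}{X_s}\subseteq X_w$ and likewise $\pn{w}{b}\in X_w$. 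Since $X_w$ is a subgroup (subring), $\pn{w}{a-b}$ (and $\pn{w}{ab}$) lie in $X_w$, so $a-b$ (and $ab$) lie in $\p{w}{X_w}\subseteq X$.

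For part (2), I would first invoke part (1) to settle the additive structure, reducing to the module closure alone. For ``$\Rightarrow$'', given $t\in T$ and $x_s=\pn{s}{x}\in X_s$ with $x\in X$, the key observation is that $\p{s}{t}\in\p{s}{T}\subseteq B$, so $\p{s}{t}\,x\in X$ and hence $t\,x_s=\pn{s}{\p{s}{t}\,x}\in\pn{s}{X}\cap R=X_s$. For ``$\Leftarrow$'', given $b\in B$ and $x\in X$, I pick $t,s$ with $c:=\pn{t}{b}\in T$ and $x_s:=\pn{s}{x}\in X_s$, and choose $u,v$ with $ut=vs=:w$. Then $\pn{w}{bx}=\pn{u}{c}\,\pn{v}{x_s}$, where $\pn{u}{c}\in\pn{u}{T}\subseteq T$ by $S$-invariance of $T$ and $\pn{v}{x_s}\in X_w$ by Remark~\ref{remark}; since $X_w$ is a left $T$-module, this product lies in $X_w$, whence $bx\in\p{w}{X_w}\subseteq X$. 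The right-module case is entirely symmetric.

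The only genuinely delicate point is the bookkeeping in the reverse implications: one must arrange a common $w$ so that both factors land in the \emph{same} index $R\cap\phi_{w^{-1}}(\cdots)$, and apply the $S$-invariance of $T$ precisely at the step $\pn{u}{c}\in T$, so that the $T$-module hypothesis on $X_w$ becomes applicable. I expect no further obstacle, since all the closure facts used (Remark~\ref{remark}, invariance of $T$, and $\p{s}{T}\subseteq B$) are already in hand.
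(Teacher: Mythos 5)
Your proposal is correct and follows essentially the same route as the paper: both directions of (1) and the reverse direction of (2) use the identical Ore-condition argument to land both elements in a common piece $X_w$ (via Remark~\ref{remark} and the $S$-invariance of $T$), and your element-wise computation $t\,x_s=\pn{s}{\p{s}{t}\,x}$ for the forward direction of (2) is just an unpacked version of the paper's containment chain $TX_s\subseteq R\cap B\pn{s}{X}=R\cap\pn{s}{BX}\subseteq X_s$, resting on the same fact that $B$ is stable under $\phi_s$ and $\phi_{s^{-1}}$. No gaps.
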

\begin{proof}
(1). If $X$ is an additive subgroup (a subring) of $\A$, then so
is $X_s=\pn{s}{X}\cap R$, for any $s\in S$.

Suppose now, that   $\{X_s\}_{s\in S}$ consists of additive
subgroups (subrings) of $R$. Let $a,b\in X$. Then there are
$s,t\in S$ such that $\pn{s}{a}\in X_s$ and $\pn{t}{b}\in X_t$. By
the left Ore condition of $S$, we can pick $k,l\in S$ such that
$ks=lt=w$. Then, making use of Remark \ref{remark}, we have
$\pn{w}{a}=\pn{k}{\pn{s}{a}}, \pn{w}{b}=\pn{l}{\pn{t}{b}}\in X_w$.
Now it is easy to complete the proof of (1).

(2). We will prove only the left version of the statement (2).
Suppose that $X$ is a left $B$-submodule of $\A$ and let $s\in S$.
Then $TX_s\subseteq R\cap B\pn{s}{X}=R\cap\pn{s}{BX}\subseteq X$,
as $B=\pn{s}{B}$. This together with (1) show that $X_s$ is a
left $T$-submodule of $R$.

Suppose now, that   $\{X_s\}_{s\in S}$ consists of left
$T$-submodules of $R$. Let $b\in B$ and $x\in X$. Then there exist
$s,t\in S$ be such that $\pn{s}{b}\in T $, $ \pn{t}{x}\in X_t$.
Since $T$ is $S$-invariant, similarly as in the proof of (1), we
can find $w\in S$ such that $\pn{w}{b}\in T$ and $\pn{w}{x}\in
X_w$. Then $\pn{w}{bx}\in X_w$ and $bx\in \p{w}{X_w}\subseteq X$
follows. This together with (1) completes the proof.
\end{proof}
Let $T$ be an $S$-invariant subring of $R$. We will say that an $S$-admissible  set $\{X_s\}_{s\in S}$ of subsets of $R$ is an
$S$-admissible set of left (right) $T$-modules if each $X_s$ is a left (right) $T$-module.
Propositions \ref{thm iso} and \ref{submodules} imply the following
\begin{cor}\label{correspondance cor}Let $T$ be an $S$-invariant subring  of $R$
and $B=\bigcup_{s\in S}\p{s}{T}\subseteq \A$.
 There is a one-to-one correspondence between the set of all left (right)
 $B$-submodules of $\A$ and the set of all $S$-admissible sets of left (right)
 $T$-submodules of $R$.
\end{cor}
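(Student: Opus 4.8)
The plan is to obtain the corollary purely by restricting the bijection of Proposition \ref{thm iso} to the appropriate subclasses on each side, using Proposition \ref{submodules} to identify exactly which $S$-admissible sets correspond to submodules. I will treat the left-module case; the right-module case is entirely symmetric.

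First I would recall from Proposition \ref{thm iso} that the maps $\Gamma$ and $\Delta$ are mutually inverse bijections between the collection $\mathcal{L}$ of all subsets of $\A$ and the collection $\mathcal{R}$ of all $S$-admissible sets of subsets of $R$; in particular $\Delta\Gamma=\mbox{id}_{\mathcal{L}}$ and $\Gamma\Delta=\mbox{id}_{\mathcal{R}}$. I would then check that $\Gamma$ sends left $B$-submodules to $S$-admissible sets of left $T$-modules: if $X$ is a left $B$-submodule of $\A$, then by the forward implication of Proposition \ref{submodules}(2) every member $X_s$ of $\Gamma(X)=\{X_s\}_{s\in S}$ is a left $T$-submodule of $R$, so $\Gamma(X)$ is indeed an $S$-admissible set of left $T$-modules.

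For the reverse direction I would show that $\Delta$ carries $S$-admissible sets of left $T$-modules back to left $B$-submodules. Let $\{X_s\}_{s\in S}$ be an $S$-admissible set of left $T$-submodules and put $X=\Delta(\{X_s\}_{s\in S})$. Since $\Gamma\Delta=\mbox{id}_{\mathcal{R}}$, we have $\Gamma(X)=\{X_s\}_{s\in S}$, and each $X_s$ is a left $T$-submodule by hypothesis; the converse implication of Proposition \ref{submodules}(2) then yields that $X$ is a left $B$-submodule of $\A$. Consequently $\Gamma$ and $\Delta$ restrict to mutually inverse maps between the set of left $B$-submodules of $\A$ and the set of $S$-admissible sets of left $T$-submodules of $R$, giving the desired one-to-one correspondence.

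Since both of the required implications are already packaged inside Proposition \ref{submodules}(2), there is no genuine obstacle here; the corollary is a formal restriction of an existing bijection. The one point that must not be overlooked is the use of the identity $\Gamma\Delta=\mbox{id}_{\mathcal{R}}$ from Proposition \ref{thm iso} in the reverse direction, as it is precisely this identity that legitimizes applying the characterization of Proposition \ref{submodules}(2) to the set $X=\Delta(\{X_s\}_{s\in S})$.
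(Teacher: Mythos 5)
Your proof is correct and is essentially the paper's own argument: the paper simply states that Propositions \ref{thm iso} and \ref{submodules} imply the corollary, and your write-up spells out exactly that restriction of the bijection $(\Gamma,\Delta)$ to $B$-submodules on one side and $S$-admissible sets of $T$-submodules on the other, using both implications of Proposition \ref{submodules}(2) together with $\Gamma\Delta=\mbox{id}_{\mathcal{R}}$.
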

\begin{rem}\label{remark correspondance}
1.  If we take  $T=R$ in the above corollary, then $B=\A$ and
  the corollary gives one-to-one correspondence between the set of all  left, right, two-sided
  ideals of $\A$ and the set of all $S$ admissible sets of all left, right, two-sided
  ideals of $R$, respectively.

  2. Let  $W, T$ be $S$-invariant  subrings of $R$ such that
   $ \bigcup_{s\in S}\p{s}{W} =\bigcup_{s\in S}\p{s}{T} = B\subseteq
   \A$ (for example assume $S$ is commutative
   and take $W=R$ and $T=\pn{t}{R}$, for some $t\in S$). Then an
   $S$-admissible set $\{X_s\}_{s\in S}$ consists of
  left $W$-submodules iff it consists of
    left $T$-submodules as it corresponds to a $B$-submodule of
    $\A$. On the other hand,  observe   that
       $T$ is a left $T$-module and it does not have to be a left
       $W$-module.
\end{rem}

\begin{lem}\label{Ck}
Let $T$ be an $S$-invariant subring  of $R$, $B=\bigcup_{s\in
S}\p{s}{T}$ its CJ-extension of $T$  contained in $\A$. Then, for any subset $X$ of $R$ and $k\in S$ we have
$B\pn{k}{X}\cap R= \bigcup_{s\in S}\p{s}{T\pn{sk}{X}}\cap R$.
\end{lem}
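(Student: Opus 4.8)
The plan is to prove the stated equality by establishing the two inclusions separately. Since both sides are subsets of $R$, I work throughout with elements of $R$ and repeatedly use that $\phi$ is a monoid homomorphism, so that $\phi_{sk}=\phi_s\phi_k$ and hence $\p{s}{\pn{sk}{x}}=\pn{k}{x}$ for all $s,k\in S$ and $x\in R$.

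For the inclusion $\supseteq$, I would take $s\in S$ and an element $c=\sum_i t_i\pn{sk}{x_i}\in T\pn{sk}{X}$ with $t_i\in T$ and $x_i\in X$. Applying the ring automorphism $\phi_{s^{-1}}$ and using the identity above gives $\p{s}{c}=\sum_i\p{s}{t_i}\,\pn{k}{x_i}$. Since $\p{s}{t_i}\in\p{s}{T}\subseteq B$ and $\pn{k}{x_i}\in\pn{k}{X}$, we obtain $\p{s}{c}\in B\pn{k}{X}$, whence $\p{s}{T\pn{sk}{X}}\subseteq B\pn{k}{X}$. Intersecting with $R$ and taking the union over $s\in S$ then yields $\bigcup_{s\in S}\p{s}{T\pn{sk}{X}}\cap R\subseteq B\pn{k}{X}\cap R$.

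For the reverse inclusion $\subseteq$, I would take $a\in B\pn{k}{X}\cap R$ and write $a=\sum_{i=1}^{n} b_i\pn{k}{x_i}$ with $b_i\in B$ and $x_i\in X$. Each $b_i$ lies in $\p{s_i}{T}$ for some $s_i\in S$, i.e. $\pn{s_i}{b_i}\in T$. The crucial step is the passage to a common denominator: by the left Ore condition of $S$ one finds, via a routine induction on $n$, elements $u_1,\dots,u_n\in S$ with $u_1s_1=\cdots=u_ns_n=:s$, and since $T$ is $S$-invariant we then have $\pn{s}{b_i}=\pn{u_i}{\pn{s_i}{b_i}}\in\pn{u_i}{T}\subseteq T$ for every $i$. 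Applying $\phi_s$ to $a$ and using $\phi_{sk}=\phi_s\phi_k$ gives $\pn{s}{a}=\sum_i\pn{s}{b_i}\,\pn{sk}{x_i}\in T\pn{sk}{X}$. As $a\in R$, this shows $a\in\p{s}{T\pn{sk}{X}}\cap R$, completing the second inclusion.

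The step I expect to be the main obstacle is precisely this passage to a single common denominator $s$ in the reverse inclusion: one must simultaneously clear the finitely many ``denominators'' $s_i$ attached to the summands $b_i\in B$, which is exactly where the left Ore condition (to produce a common left multiple) and the $S$-invariance of $T$ (guaranteeing $\pn{u}{T}\subseteq T$, so that the enlarged denominator still lands in $T$) are both indispensable. Once one $s$ works for all $i$ at once, applying the automorphism $\phi_s$ converts the membership into the required form essentially mechanically.
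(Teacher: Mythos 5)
Your proof is correct and takes essentially the same route as the paper's: the inclusion $\supseteq$ via the containments $\p{s}{T}\subseteq B$ and $\p{s}{\pn{sk}{X}}\subseteq \pn{k}{X}$, and the inclusion $\subseteq$ by clearing the finitely many denominators $s_i$ of the $b_i\in B$ to a single $s\in S$ and applying $\phi_s$. The only difference is one of detail: the paper simply asserts the existence of such a common $s$, while you justify it explicitly by the left Ore condition together with the $S$-invariance of $T$.
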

\begin{proof}
 Let $x\in B\pn{k}{X}\cap R$. Then $x=\sum_{i=1}^nb_i\pn{k}{x_i}\in R$,
  where  $b_i\in B$ and $x_i\in X$, for $1\le i\le n$. Let $s\in
  S$  be such that $\pn{s}{b_i}\in T$, for all $1\le i\le n$. Then
  $\pn{s}{x}=\sum_{i=1}^n \pn{s}{b_i}\pn{sk}{x_i}\in T\pn{sk}{X}$.
This shows that $B\pn{k}{X}\cap R\subseteq  \bigcup_{s\in
S}\p{s}{T\pn{sk}{X}}\cap R$. The reverse inclusion is clear as,
for any $s\in S$, we have $\p{s}{T}\subseteq B$ and
$\p{s}{\pn{sk}{X}}\subseteq \pn{k}{X}$.
\end{proof}
\begin{definition}\label{def. closed ideals}
Let $T, W$ be $S$-invariant subrings of $R$.  For any
$(T,W)$-subbimodule $M$ of $R$ and $k\in S$ we define
$c^{(T,W)}_{k}(M)=\bigcup_{s\in S}\p{s}{T\pn{sk}{M}W}\cap R$.
\end{definition}

\begin{prop}\label{BMC}
Let $M$ be a  $(T,W)$-subbimodule
 of $R$, where $T, W$ are $S$-invariant subrings of $R$ and $B=A(T;S),
C=A(W,S)\subseteq \A$.   Then $\{c^{(T,W)}_{s}(M)\}_{s\in S}$ is
an admissible set of $(T,W)$-bimodules associated to the
$(B,C)$-subbimodule $BMC$ of $\A$.
\end{prop}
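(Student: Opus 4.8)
The plan is to prove the proposition by identifying $\{c^{(T,W)}_s(M)\}_{s\in S}$ with $\Gamma(BMC)$; once this identity is in hand, both the admissibility and the $(T,W)$-bimodule structure follow almost formally from the correspondences already established. The central computation is therefore the identity
$$c^{(T,W)}_k(M)=\pn{k}{BMC}\cap R\qquad\text{for every }k\in S.$$

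First I would record that $B=A(T;S)$ and $C=A(W;S)$ are $S$-invariant as subsets of $\A$: the proof of Proposition \ref{extensions inside} shows $\pn{k}{B}=B$ and $\pn{k}{C}=C$ for all $k\in S$. Since each $\phi_k$ is a ring automorphism of $\A$, this gives $\pn{k}{BMC}=\pn{k}{B}\,\pn{k}{M}\,\pn{k}{C}=B\,\pn{k}{M}\,C$, so it suffices to identify $B\pn{k}{M}C\cap R$ with $c^{(T,W)}_k(M)=\bigcup_{s\in S}\p{s}{T\pn{sk}{M}W}\cap R$.

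For the inclusion $\subseteq$, take $x=\sum_i b_i\pn{k}{m_i}c_i\in R$ with $b_i\in B$, $m_i\in M$, $c_i\in C$. Using $B=\bigcup_s\p{s}{T}$ and $C=\bigcup_s\p{s}{W}$ together with the $S$-invariance of $T$ and $W$, I would choose for each index elements of $S$ carrying $b_i$ into $T$ and $c_i$ into $W$; the key observation is that once $\pn{u}{b_i}\in T$, then $\pn{vu}{b_i}\in\pn{v}{T}\subseteq T$ for every $v$, and likewise for the $c_i$. A common left multiple $s$ of these finitely many elements, which exists because the left Ore condition yields common left multiples of finite families in a cancellative monoid, then satisfies $\pn{s}{b_i}\in T$ and $\pn{s}{c_i}\in W$ simultaneously, whence $\pn{s}{x}=\sum_i\pn{s}{b_i}\,\pn{sk}{m_i}\,\pn{s}{c_i}\in T\pn{sk}{M}W$ and $x\in\p{s}{T\pn{sk}{M}W}\cap R$. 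The reverse inclusion is the easy direction: for any $s$ one has $\p{s}{T}\subseteq B$, $\p{s}{W}\subseteq C$ and $\p{s}{\pn{sk}{M}}=\pn{k}{M}$, so $\p{s}{T\pn{sk}{M}W}\subseteq B\pn{k}{M}C$. This is the two-sided analogue of Lemma \ref{Ck} and the argument parallels its proof; the main (though mild) obstacle is precisely the simultaneous choice of $s$ absorbing the left factors into $T$ and the right factors into $W$ at once.

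Finally I would assemble the conclusion. The set $BMC$ is visibly an additive subgroup of $\A$ with $B\cdot BMC\subseteq BMC$ and $BMC\cdot C\subseteq BMC$, hence a $(B,C)$-subbimodule. By Proposition \ref{thm iso}, $\Gamma(BMC)=\{\pn{s}{BMC}\cap R\}_{s\in S}$ is an $S$-admissible set, and by the computation above it coincides with $\{c^{(T,W)}_s(M)\}_{s\in S}$. Applying Corollary \ref{correspondance cor} to the $S$-invariant subring $T$ (with $B=A(T;S)$) shows that each member is a left $T$-module, and applying it to $W$ (with $C=A(W;S)$) shows that each is a right $W$-module; together these give that $\{c^{(T,W)}_s(M)\}_{s\in S}$ is an $S$-admissible set of $(T,W)$-bimodules associated to the $(B,C)$-subbimodule $BMC$, as claimed.
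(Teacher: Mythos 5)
Your proof is correct and follows essentially the same route as the paper: identify $\Gamma(BMC)=\{B\pn{s}{M}C\cap R\}_{s\in S}$ using the $S$-invariance of $B$ and $C$, then match this with $\{c^{(T,W)}_s(M)\}_{s\in S}$ via a two-sided analogue of Lemma \ref{Ck} and conclude by the correspondence of Corollary \ref{correspondance cor}. The only difference is one of detail: the paper simply cites ``bimodule versions'' of Lemma \ref{Ck} and Corollary \ref{correspondance cor}, whereas you write out the bimodule version of Lemma \ref{Ck} explicitly, including the simultaneous choice (via iterated left Ore) of one $s\in S$ absorbing all left factors into $T$ and all right factors into $W$.
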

\begin{proof} Let us consider $(B,C)$-subbimodule $BMC$ of $\A$.
Since $\pn{s}{B}=B$ and $\pn{s}{C}$, for all $s\in S$, we have
$\Gamma(BMC)=\{B\pn{s}{M}C\cap R\}_{s\in S}$.
 Now, the proof is a direct consequence of a
bimodule versions of Corollary \ref{correspondance cor} and Lemma
\ref{Ck}.
\end{proof}
\begin{definition}\label{def. of closed} Let $M$ be a $(T,W)$-subbimodule
 of $R$, where $T, W$ are $S$-invariant subrings of $R$ and $B=A(T;S),
C=A(W,S)\subseteq \A$.  We say that $M$ is $(T,W)$-closed if
$M=BMC\cap R$.
\end{definition}

The following proposition offers an internal (in $R$)
characterization of   $(T,W)$-closed subbimodules of $R$.

\begin{prop}\label{inner description of closed}
 For a $(T,W)$-subbimodule $M$ of $R$ the following conditions are
 equivalent:
\begin{enumerate}
  \item  $M$ is $(T,W)$-closed.
  \item $c^{(T,W)}_{\mbox{\rm \scriptsize id}}(M)=M$
  \item $R\cap \p{s}{T\pn{s}{M}W}\subseteq M$, for any
$s\in S$.
  \end{enumerate}

\end{prop}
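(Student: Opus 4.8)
The plan is to collapse all three conditions onto the single equality $M=BMC\cap R$, by first identifying $c^{(T,W)}_{\mbox{\rm \scriptsize id}}(M)$ with $BMC\cap R$ and then peeling off the pointwise condition (3) by elementary set theory.

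First I would record the identity $c^{(T,W)}_{\mbox{\rm \scriptsize id}}(M)=BMC\cap R$. This is exactly the bimodule version of Lemma \ref{Ck} (the version already used in the proof of Proposition \ref{BMC}), applied with $X=M$ and with $k$ the identity of $S$: since $\pn{\mbox{\rm \scriptsize id}}{M}=M$, that lemma yields $BMC\cap R=\bigcup_{s\in S}\p{s}{T\pn{s}{M}W}\cap R$, whose right-hand side is $c^{(T,W)}_{\mbox{\rm \scriptsize id}}(M)$ by Definition \ref{def. closed ideals}. With this in hand the equivalence $(1)\Leftrightarrow(2)$ is immediate: by Definition \ref{def. of closed}, (1) asserts $M=BMC\cap R$, whereas (2) asserts $M=c^{(T,W)}_{\mbox{\rm \scriptsize id}}(M)$, and the two right-hand sides coincide.

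For $(2)\Leftrightarrow(3)$ I would first note that the inclusion $M\subseteq c^{(T,W)}_{\mbox{\rm \scriptsize id}}(M)$ always holds: since $B\supseteq T\ni 1$ and $C\supseteq W\ni 1$ we have $M\subseteq BMC$, and as $M\subseteq R$ this gives $M\subseteq BMC\cap R=c^{(T,W)}_{\mbox{\rm \scriptsize id}}(M)$. Hence condition (2) is equivalent to the reverse inclusion $c^{(T,W)}_{\mbox{\rm \scriptsize id}}(M)\subseteq M$. Distributing the intersection over the union, $c^{(T,W)}_{\mbox{\rm \scriptsize id}}(M)=\bigcup_{s\in S}\bigl(R\cap\p{s}{T\pn{s}{M}W}\bigr)$, so this reverse inclusion holds iff each member of the union lies in $M$, that is, iff $R\cap\p{s}{T\pn{s}{M}W}\subseteq M$ for every $s\in S$, which is precisely (3). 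This closes the chain of equivalences.

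I do not expect a genuine obstacle here: the statement is essentially a bookkeeping consequence of the closure operation $c^{(T,W)}$ and of Lemma \ref{Ck}. The only point needing care is the bimodule form of Lemma \ref{Ck}; its one-sided form is proved in the text, and I would either cite it as the bimodule version already invoked for Proposition \ref{BMC} or repeat its short argument symmetrically on the right. The remaining ingredients — the automatic inclusion $M\subseteq c^{(T,W)}_{\mbox{\rm \scriptsize id}}(M)$, which turns the equality in (2) into a single inclusion, and the distributivity of intersection over union, which lets that inclusion be tested one $s$ at a time — are routine but worth stating explicitly.
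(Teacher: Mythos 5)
Your proposal is correct and follows essentially the same route as the paper: both reduce everything to the identity $c^{(T,W)}_{\mbox{\rm \scriptsize id}}(M)=BMC\cap R$ (you via the bimodule version of Lemma \ref{Ck} at $k=\mbox{id}$, the paper via Proposition \ref{BMC}, whose proof is exactly that lemma plus the correspondence), and then settle $(2)\Leftrightarrow(3)$ by the automatic inclusion $M\subseteq c^{(T,W)}_{\mbox{\rm \scriptsize id}}(M)$ together with testing the union term by term. The only cosmetic difference is that the paper splits $(2)\Leftrightarrow(3)$ into a tautological implication and its converse, while you phrase it as a single equivalence; the mathematical content is identical.
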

\begin{proof} Recall that $c^{(T,W)}_{ \mbox{\rm \scriptsize id}}(M)
=\bigcup_{s\in S}\p{s}{T\pn{s}{M}W}\cap R$. The equivalence
$(1)\Leftrightarrow (2)$ is given by Proposition \ref{BMC}. The
implication $(2)\Rightarrow (3)$ is a tautology.

The statement $(3)$ yields that $c^{(T,W)}_{\mbox{\rm \scriptsize
id}}(M)\subseteq M$ and clearly $M\subseteq c^{(T,W)}_{\mbox{\rm
\scriptsize id}}(M)$. This shows that $(3)\Rightarrow (2)$ and
completes the proof of the proposition.
\end{proof}

Let us notice that if $V$ is a  $(B,C)$-subbimodule of $\A$, then
$V\cap R$ is a  $(T,W)$-subbimodule of $R$ and $V\cap R\subseteq
B(V\cap R)C\cap R\subseteq V\cap R$, i.e. $V\cap R$ is a
$(T,W)$-closed subbimodule of $R$.
\begin{prop}\label{properies of closed} Let $T, W$ be $S$-invariant subrings of $R$. Then:
\begin{enumerate}
  \item If  $\{X_s\}_{s\in S}$ is an $S$-admissible set of
 $(T,W)$-subbimodules of $R$, then   $X_s$ is a closed
 $(T,W)$-subbimodule of $R$, for each $s\in S$.
  \item Let $T_1\subseteq T$ and $W_1\subseteq W$ be $S$-invariant
  subrings. Then any $(T,W)$-closed subbimodule $M$ of $R$ is
  closed as $(T_1,W_1)$-subbimodule.
    \end{enumerate}
    \end{prop}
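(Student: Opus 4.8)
The plan is to reduce both statements to the internal criterion of Proposition \ref{inner description of closed}, namely that a $(T,W)$-subbimodule $N$ of $R$ is $(T,W)$-closed if and only if $R\cap \p{s}{T\pn{s}{N}W}\subseteq N$ for every $s\in S$ (condition (3)). Verifying this pointwise criterion is far more convenient than working with the definition $N=BNC\cap R$ directly, and in both parts the required inclusion will follow from elementary monotonicity containments.

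For (1), each $X_k$ is a $(T,W)$-subbimodule by hypothesis, so only closedness is at stake. Fixing $k\in S$, I would check condition (3) for $X_k$. For any $s\in S$, Remark \ref{remark} gives $\pn{s}{X_k}\subseteq X_{sk}$, and since $X_{sk}$ is a $(T,W)$-bimodule we get $T\pn{s}{X_k}W\subseteq TX_{sk}W\subseteq X_{sk}$. Applying $\phi_{s^{-1}}$ and intersecting with $R$ yields $R\cap\p{s}{T\pn{s}{X_k}W}\subseteq R\cap\p{s}{X_{sk}}=X_k$, where the last equality is exactly the $S$-admissibility of $\{X_s\}_{s\in S}$. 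Thus $X_k$ satisfies (3) and is $(T,W)$-closed. (Alternatively, $\{X_s\}_{s\in S}$ corresponds via the bimodule form of Corollary \ref{correspondance cor} to a $(B,C)$-subbimodule $V$ of $\A$ with $X_s=\pn{s}{V}\cap R$; since $\pn{s}{B}=B$ and $\pn{s}{C}=C$, each $\pn{s}{V}$ is again a $(B,C)$-subbimodule, and the remark preceding this proposition shows $\pn{s}{V}\cap R=X_s$ is closed.)

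For (2), I first note that $M$, being a $(T,W)$-bimodule, is also a $(T_1,W_1)$-bimodule because $T_1\subseteq T$ and $W_1\subseteq W$, so the notion of $(T_1,W_1)$-closedness makes sense. I would again verify condition (3), now for the pair $(T_1,W_1)$: for each $s\in S$, the inclusions $T_1\subseteq T$ and $W_1\subseteq W$ give $T_1\pn{s}{M}W_1\subseteq T\pn{s}{M}W$, whence $R\cap\p{s}{T_1\pn{s}{M}W_1}\subseteq R\cap\p{s}{T\pn{s}{M}W}$. Since $M$ is $(T,W)$-closed, Proposition \ref{inner description of closed} gives $R\cap\p{s}{T\pn{s}{M}W}\subseteq M$, and combining the two inclusions shows that $M$ satisfies condition (3) for $(T_1,W_1)$. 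Hence $M$ is $(T_1,W_1)$-closed.

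I do not expect a genuine obstacle here: once Proposition \ref{inner description of closed} is available, both parts are short monotonicity arguments. The only point requiring care is in (1), where one must correctly combine the orbit inclusion $\pn{s}{X_k}\subseteq X_{sk}$ from Remark \ref{remark} with the defining relation of $S$-admissibility; keeping the indices $s$, $k$ and $sk$ matched up is the sole place where an error could creep in.
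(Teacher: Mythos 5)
Your proof is correct. For part (1), however, your primary argument takes a genuinely different route from the paper's: you verify the internal criterion (3) of Proposition \ref{inner description of closed} directly, combining the orbit inclusion $\pn{s}{X_k}\subseteq X_{sk}$ from Remark \ref{remark} with the bimodule property $TX_{sk}W\subseteq X_{sk}$ and the admissibility equality $R\cap\p{s}{X_{sk}}=X_k$, and this chain of inclusions is sound. The paper instead invokes the bimodule version of Corollary \ref{correspondance cor} to produce a $(B,C)$-subbimodule $V$ of $\A$ with $X_s=\pn{s}{V}\cap R$, and then applies the observation recorded just before the proposition that the trace $V\cap R$ of any $(B,C)$-subbimodule of $\A$ is a $(T,W)$-closed subbimodule of $R$ (using that $\pn{s}{V}$ is again a $(B,C)$-subbimodule since $\pn{s}{B}=B$ and $\pn{s}{C}=C$); your parenthetical alternative is exactly this argument. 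The paper's route is shorter once the correspondence machinery is in place and exhibits the conceptual source of closedness, namely that closed bimodules are precisely traces of bimodules of $\A$; your main argument buys self-containedness, as it never leaves $R$ and uses only Remark \ref{remark}, the definition of admissibility, and the already-proved internal criterion. For part (2) the paper merely declares the claim an easy exercise via condition (3) of Proposition \ref{inner description of closed}; your monotonicity argument $T_1\pn{s}{M}W_1\subseteq T\pn{s}{M}W$ followed by intersecting with $R$ is precisely that intended exercise, carried out correctly, including the preliminary check that $M$ is indeed a $(T_1,W_1)$-subbimodule so that the criterion applies.
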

\begin{proof}
 By Corollary \ref{correspondance cor}, there is
 $(B,C)$-subbimodule $V$ of $\A$ such that $X_s=\pn{s}{V}\cap R$.
 This together with the observation made just before the
 proposition, gives (1).

 The statement (2)  is an easy exercise if we use the description  (3) of closeness
 from of Proposition \ref{inner description of closed}.
\end{proof}

\section{Applications}
In this section we restrict our attention to left ideals, i.e. we
take $T=R$ and $W$ is the subring of $R$ generated by 1. In this
case, for $k\in S$,  we will write $c_k$ instead of $c_k^{(T,W)}$.
That is, by Proposition \ref{BMC},  $c_k(M)=\A\pn{k}{M}\cap R$, for any left ideal $M$ of $R$.

 Recall (Cf. Remark \ref{remark correspondance})(1)) that there
is one-to-one correspondence between left ideals   of $\A$ and
$S$-admissible sets of left ideals of $R$. If a left ideal $L$ of $\A$
corresponds to the $S$-admissible set $\{L_s\}_{s\in S}$, we will
say that $L$ is associated to $\{L_s\}_{s\in S}$ or that $\{L_s\}_{s\in
S}$ is associated to $L$.
\begin{definition}
We say that an $S$-admissible set $\{L_s\}_{s\in S}$ of left ideals
of $R$ is stable if there exists $k\in S$ such that
$c_s(L_k)=L_{sk}$, for all $s\in S$.
\end{definition}
The following proposition offers some other characterizations of
stability of $S$-admissible sets of left ideals.
\begin{prop} \label{characterization of stable sets}
 Let $\{L_s\}_{s\in S}$ be an  $S$-admissible set of left ideals
 of $R$ and $L$ be its associated left ideal of $\A$. The following
 conditions are equivalent:
\begin{enumerate}
  \item $\{L_s\}_{s\in S}$ is stable.
  \item  There exists $k\in S$ such that
  $\pn{sk}{L}=\A(\pn{sk}{L}\cap R)$, for any $s\in S$.
  \item There exists $k\in S$ such that $\pn{k}{L}=\A(\pn{k}{L}\cap R)$.
  \item There exist $k\in S$ and a left ideal $W$ of $R$ such that
  $\pn{k}{L}=\A W$.
\end{enumerate}
\end{prop}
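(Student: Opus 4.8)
The engine of the proof is the order-preserving bijection of Proposition~\ref{thm iso}, in the form of Remark~\ref{remark correspondance}(1): a left ideal of $\A$ is completely determined by its associated $S$-admissible set, so two left ideals of $\A$ coincide exactly when the sets $\Gamma(\cdot)$ attached to them coincide. The plan is to first record two instances of $\Gamma$ and read off $(1)\Leftrightarrow(3)$ almost for free, then to pivot the remaining conditions around $(3)$ by applying automorphisms. Throughout write $L_s=\pn{s}{L}\cap R$, so that $\{L_s\}_{s\in S}=\Gamma(L)$.

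The two computations I would carry out are as follows. Since $\phi$ is a monoid homomorphism into $\mathrm{Aut}(\A)$, for the left ideal $\pn{k}{L}$ of $\A$ one has $\Gamma(\pn{k}{L})=\{\pn{s}{\pn{k}{L}}\cap R\}_{s\in S}=\{\pn{sk}{L}\cap R\}_{s\in S}=\{L_{sk}\}_{s\in S}$. For a left ideal $M$ of $R$, using $\pn{s}{\A M}=\pn{s}{\A}\,\pn{s}{M}=\A\pn{s}{M}$, one gets $\Gamma(\A M)=\{\A\pn{s}{M}\cap R\}_{s\in S}=\{c_s(M)\}_{s\in S}$, by the formula $c_s(M)=\A\pn{s}{M}\cap R$ recorded at the start of this section. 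With these in hand, $(1)\Leftrightarrow(3)$ is immediate: stability of $\{L_s\}_{s\in S}$ with parameter $k$ says precisely $c_s(L_k)=L_{sk}$ for every $s\in S$, i.e. $\Gamma(\A L_k)=\Gamma(\pn{k}{L})$, and by injectivity of $\Gamma$ this is the equality $\pn{k}{L}=\A(\pn{k}{L}\cap R)$ of condition $(3)$, with the same $k$.

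For the rest I would argue around $(3)$. To prove $(3)\Rightarrow(2)$ I would apply the automorphism $\phi_s$ to $\pn{k}{L}=\A L_k$, obtaining $\pn{sk}{L}=\A\pn{s}{L_k}$, and then squeeze this ideal between two others. On one side $\pn{s}{L_k}=\pn{s}{\pn{k}{L}\cap R}=\pn{sk}{L}\cap\pn{s}{R}\subseteq\pn{sk}{L}\cap R=L_{sk}$, where I use $\pn{s}{R}\subseteq R$, so $\A\pn{s}{L_k}\subseteq\A L_{sk}$; on the other side $L_{sk}\subseteq\pn{sk}{L}$ forces $\A L_{sk}\subseteq\pn{sk}{L}$. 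Hence $\pn{sk}{L}=\A\pn{s}{L_k}\subseteq\A L_{sk}\subseteq\pn{sk}{L}$, which yields condition $(2)$. The converse $(2)\Rightarrow(3)$ is just the special case $s=\mathrm{id}$, while $(3)\Rightarrow(4)$ is witnessed by $W=\pn{k}{L}\cap R$. Finally $(4)\Rightarrow(3)$: intersecting $\pn{k}{L}=\A W$ with $R$ gives $W\subseteq\pn{k}{L}\cap R$, whence $\A W\subseteq\A(\pn{k}{L}\cap R)\subseteq\pn{k}{L}=\A W$, and $(3)$ follows.

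The one step that needs genuine care is $(3)\Rightarrow(2)$, i.e. upgrading the single instance $k$ to all shifts $sk$: the tempting shortcut $\pn{s}{L_k}=L_{sk}$ is \emph{false} in general, since $\pn{s}{R}$ may be a proper subring of $R$ and only the inclusion $\pn{s}{L_k}\subseteq L_{sk}$ survives, so the equality of ideals cannot be read off directly and the two-sided squeeze is essential. Everything else is bookkeeping once the two $\Gamma$-computations and the injectivity of the correspondence are in place; the fact that $\{L_{sk}\}_{s\in S}$ is again $S$-admissible, implicit in writing $\Gamma(\pn{k}{L})$, requires no separate check because $\Gamma$ always produces $S$-admissible sets by Lemma~\ref{X gives admisible}.
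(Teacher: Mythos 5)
Your proof is correct and takes essentially the same route as the paper: both rest on the bijection $\Gamma$/$\Delta$ of Proposition \ref{thm iso}, the identification $\Gamma(\A M)=\{c_s(M)\}_{s\in S}$ coming from Proposition \ref{BMC}, the squeeze $\A L_{sk}\subseteq \pn{sk}{L}=\A\pn{s}{L_k}\subseteq \A L_{sk}$ (using $\pn{s}{R}\subseteq R$) to pass from the single exponent $k$ to all shifts $sk$, and the replacement of $W$ by $\pn{k}{L}\cap R$ to handle condition (4). The only difference is organizational --- you prove $(1)\Leftrightarrow(3)$ directly and pivot the remaining conditions around $(3)$, whereas the paper runs the cycle $(1)\Rightarrow(2)\Rightarrow(3)\Rightarrow(4)\Rightarrow(1)$ --- but each individual step corresponds exactly to a step in the paper's argument.
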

\begin{proof}
 $(1)\Rightarrow (2)$. Suppose $\{L_s\}_{s\in S}$
 is stable, that is we can pick $k\in S$ such that
$c_s(L_k)=L_{sk}$, for all $s\in S$.
 Recall that $L_s=\pn{s}{L}\cap R$.  This means that
 $\{L_{sk}\}_{s\in S}=\{c_s(L_k)\}_{s\in S}$ is an $S$-admissible
 set of left ideals of $R$ associated to $\pn{k}{L}$. Now,  Proposition \ref{BMC} applied to $M=L_k$, yields that the left ideals $\pn{k}{L}$ and $\A L_k$ of $\A$ have the same associated $S$-admissible sets. Hence, by Proposition \ref{thm iso},  $\pn{k}{L}=\A L_k$.   Then, for any $s\in
 S$, we have
 \begin{equation*}\begin{split}
 \A (\pn{s}{\pn{k}{L}}\cap R) & \subseteq \pn{s}{\pn{k}{L}}= \pn{s}{\A (\pn{k}{L}\cap R)}\subseteq \\
  & \subseteq \A (\pn{s}{\pn{k}{L}} \cap \pn{s}{R})
  \subseteq \A (\pn{s}{\pn{k}{L}}\cap R).
 \end{split}
  \end{equation*}
 This shows
 that $\pn{sk}{L}=\A(\pn{sk}{L}\cap R)$, i.e. (2) holds.

 The implications $(2)\Rightarrow (3)\Rightarrow (4)$ are
 tautologies.

 $(4)\Rightarrow (1)$. Let $k\in S$ and the left ideal $W$ of $R$
 be such that $\pn{k}{L}=\A W$.
 Eventually replacing $W$ by $\pn{k}{L}\cap R$, we may additionally
 assume that $W=\pn{k}{L}\cap R=L_k$.
 Therefore, by Proposition \ref{BMC}, the left ideal $\pn{k}{L}$
 of $\A$ is associated to the $S$-admissible set $\{c_s(L_k)\}_{s\in
 S}$. Also, by definition,  $\pn{k}{L}$ is associated
 to $\{\pn{s}{\pn{k}{L}}\cap R\}_{s\in S}=\{L_{sk}\}_{s\in S}$.
 This shows that $c_s(L_k)=L_{sk}$, for any $s\in S$ and completes
 the proof of the implication.
\end{proof}
\begin{cor}\label{cor stable ideals}
 Suppose that the  $S$-admissible set $\{L_s\}_{s\in S}$ of left
 ideals of $R$   is associated to a finitely generated left ideal
 of $\A$. Then $\{L_s\}_{s\in S}$ is stable.
\end{cor}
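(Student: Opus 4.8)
The plan is to verify condition (4) of Proposition \ref{characterization of stable sets} for the finitely generated left ideal $L$ of $\A$ associated to $\{L_s\}_{s\in S}$; that is, I would produce $k\in S$ and a left ideal $W$ of $R$ with $\pn{k}{L}=\A W$. Stability of $\{L_s\}_{s\in S}$ then follows at once from the implication $(4)\Rightarrow(1)$ of that proposition, so the entire task reduces to constructing such a $k$ and $W$.

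First I would fix a finite generating set, writing $L=\sum_{i=1}^n\A a_i$ with $a_1,\dots,a_n\in\A$. By Definition \ref{ext}(2) each generator is carried into $R$ by a suitable element of $S$: there are $s_i\in S$ and $b_i\in R$ with $a_i=\p{s_i}{b_i}$, equivalently $\pn{s_i}{a_i}=b_i\in R$. At this stage each generator is handled by its own element $s_i$, and the point is to unify these.

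The key step is to replace the individual $s_i$ by a single element $k\in S$ that simultaneously carries \emph{all} the generators into $R$, and this is exactly where the left Ore condition on $S$ enters. Applying the Ore condition iteratively to $s_1,\dots,s_n$, I would build a common left multiple: an element $k\in S$ together with $u_i\in S$ satisfying $k=u_is_i$ for every $i$. Then $\pn{k}{a_i}=\pn{u_i}{\pn{s_i}{a_i}}=\pn{u_i}{b_i}$, and since $\phi_{u_i}$ preserves $R$ we obtain $\pn{k}{a_i}\in R$ for all $i$. Although elementary, this common-left-multiple argument is the real content of the corollary, so I expect it to be the main (if modest) obstacle.

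Finally, since $\phi_k$ is an automorphism of $\A$ with $\pn{k}{\A}=\A$, I would compute
$\pn{k}{L}=\sum_{i=1}^n\pn{k}{\A a_i}=\sum_{i=1}^n\A\,\pn{k}{a_i}=\A W$,
where $W=\sum_{i=1}^n R\,\pn{k}{a_i}$ is a left ideal of $R$ because each $\pn{k}{a_i}\in R$; the last equality uses $\A R=\A$. This realizes condition (4) of Proposition \ref{characterization of stable sets}, and hence $\{L_s\}_{s\in S}$ is stable.
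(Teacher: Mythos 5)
Your proposal is correct and follows essentially the same route as the paper: both write $L=\A a_1+\ldots+\A a_n$, find a single $k\in S$ with $\pn{k}{a_i}\in R$ for all $i$, set $W=\sum_{i=1}^n R\,\pn{k}{a_i}$ so that $\pn{k}{L}=\A W$, and invoke condition (4) of Proposition \ref{characterization of stable sets}. The only difference is that you spell out the iterated left Ore argument producing the common $k$, which the paper takes for granted.
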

\begin{proof}
 Let  $L=\A a_1+\ldots +\A a_n$ be a left ideal of $\A$
 associated to $\{L_s\}_{s\in S}$ and $k\in S$ be such that $\pn{k}{a_i}=b_i\in
 R$, for $1\le i\le n$. Then $\pn{k}{L}=\A W$, where
 $W=\sum_{i=1}^nRb_i$. Thus the condition (4) of
   Proposition \ref{characterization of stable
 sets} holds, i.e. $\{L_s\}_{s\in S}$ is stable.
\end{proof}

Recall (Cf. Definition \ref{def. of closed}) that a left ideal  $X$ of $R$ is closed if $X=\A X\cap R$
and that $\A X\cap R$ is always a closed left ideal of $R$. This
implies that $\A X\cap R$ is the smallest closed left ideal of $R$
containing $X$. We will call it the closure of $X$ and denote by
$\overline{X}$. Proposition \ref{inner description of closed}
offers an internal characterization of the closure of $X$, namely
$\overline{X} =\bigcup_{s\in S}\p{s}{R\pn{s}{X}}\cap R$.

With all the above preparation  we are ready to prove the following theorem.

\begin{thm}\label{thm noetherian}
 For the $CJ$-extension $\A$  of $R$ the following conditions are equivalent:

\begin{enumerate}
  \item $\A$ is left noetherian;
  \item The ring
 $R$ has \mbox{\rm ACC} on closed left ideals and every
 $S$-admissible set of left ideals is stable;
 \item   Every closed left ideal   of $R$ is the  closure of a finitely
 generated left ideal of $R$ and every $S$-admissible set of
 left ideals is stable.
 \end{enumerate}
  \end{thm}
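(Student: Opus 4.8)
The plan is to exploit the one-to-one correspondence (Remark \ref{remark correspondance}(1)) between left ideals of $\A$ and $S$-admissible sets of left ideals of $R$, together with the characterizations of stability in Proposition \ref{characterization of stable sets} and the stability criterion for finitely generated ideals in Corollary \ref{cor stable ideals}. The key bridge is that left ideals of $\A$ are governed by their associated admissible sets, and that finite generation on the $\A$ side forces stability on the $R$ side, while the ACC/closure conditions control the remaining ``size'' information carried by the closed ideals $L_k$.

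First I would prove $(1)\Rightarrow(2)$. Assuming $\A$ is left noetherian, every left ideal of $\A$ is finitely generated, so by Corollary \ref{cor stable ideals} every $S$-admissible set of left ideals of $R$ is stable; this handles the stability half. For the ACC on closed left ideals of $R$, I would take an ascending chain $\overline{X_1}\subseteq\overline{X_2}\subseteq\cdots$ of closed left ideals and push it up to the chain $\A X_1\subseteq \A X_2\subseteq\cdots$ in $\A$, which must stabilize; since closedness means $\overline{X_i}=\A X_i\cap R$, intersecting back with $R$ shows the original chain stabilizes. I would next do $(2)\Rightarrow(3)$: given ACC on closed left ideals, a standard maximal-counterexample argument shows each closed left ideal is the closure of a finitely generated one — among all finitely generated subideals of a closed ideal $M$, pick one whose closure is maximal in the chain-condition sense, and if its closure were properly smaller than $M$ one could enlarge, contradicting maximality. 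Stability is carried along verbatim.

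The main work is $(3)\Rightarrow(1)$, and this is where I expect the chief obstacle. I would take an arbitrary left ideal $L$ of $\A$, associated to $\{L_s\}_{s\in S}$, and aim to show $L$ is finitely generated. By the stability hypothesis, there is $k\in S$ with $c_s(L_k)=L_{sk}$ for all $s$, and by the argument inside Proposition \ref{characterization of stable sets} this gives $\pn{k}{L}=\A L_k$; since $\phi_k$ is an automorphism of $\A$, it suffices to show $\A L_k$ is finitely generated as a left ideal of $\A$. Here I would invoke hypothesis (3): the closed left ideal $\overline{L_k}=\A L_k\cap R$ is the closure of a finitely generated left ideal $W=\sum_{i=1}^n R b_i$ of $R$, so $\overline{W}=\overline{L_k}$. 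The crux is to translate equality of closures into equality $\A W=\A L_k$ of the generated left ideals of $\A$; this follows because $\A\overline{X}=\A(\A X\cap R)\subseteq \A X$ and $X\subseteq\overline{X}$ together force $\A X=\A\overline{X}$ for any left ideal $X$ of $R$, so $\A W=\A\overline{W}=\A\overline{L_k}=\A L_k$. Then $\A L_k=\sum_{i=1}^n\A b_i$ is finitely generated, hence so is $L$.

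The delicate point throughout — and the step I would scrutinize most carefully — is the interplay between stability and finite generation: stability guarantees that some translate $\pn{k}{L}$ is extended from a single closed left ideal $L_k$ of $R$, converting an a priori ``infinitely described'' admissible family into the finitely controllable datum $\A L_k$, after which hypothesis (3) finishes the job. I would double-check that the passage $\A X=\A\overline{X}$ is genuinely valid (it relies only on $\overline{X}\subseteq\A X$, which is immediate from $\overline{X}=\A X\cap R$) and that the reduction to $\pn{k}{L}$ loses no generality, since $\phi_k$ is an $\A$-automorphism and hence preserves finite generation of left ideals.
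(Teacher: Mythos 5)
Your proposal is correct and follows essentially the same route as the paper: $(1)\Rightarrow(2)$ via Corollary \ref{cor stable ideals} and intersecting the stabilized chain $\A X_1\subseteq\A X_2\subseteq\cdots$ back with $R$, $(2)\Rightarrow(3)$ by the standard maximal-closure argument, and $(3)\Rightarrow(1)$ by using stability to write $\pn{k}{L}=\A L_k$ and then replacing $L_k$ by a finitely generated ideal with the same closure, via the observation $\A X=\A\overline{X}$. The only (inessential) difference is that you invoke condition (3) of Proposition \ref{characterization of stable sets} where the paper uses condition (4) and then passes to the closure of $W$; both reduce to the same computation.
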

\begin{proof}
 $(1)\Rightarrow (2)$. Suppose $\A$ is left noetherian.
 Let $X_1\subseteq X_2\subseteq \ldots$
 be a chain of closed left ideals of $R$. Since $\A$ is left noetherian,
 there exists $n\geq 1$ such that $\A X_n=\A X_{n+m}$, for all $m\ge
 0$. By assumption, every $X_i$'s is closed,
 so $X_n=\A X_n\cap R=\A X_{n+m}\cap R=X_{n+m}$, for all $m\ge 0$.
 This shows that  $R$ has \mbox{\rm ACC} on closed left ideals.

 Since $\A$ is left noetherian, every $S$-admissible set $\{L_s\}_{s\in S}$ of left ideals is
 associated to a finitely generated left ideal of $\A$. Hence, by
 Corollary \ref{cor stable ideals}, $\{L_s\}_{s\in S}$ is stable.

$(2)\Rightarrow (3)$. The proof is a version of a standard
argument. Let $W$ be a closed left ideal of $R$. Consider the set
$\mathfrak{W}$ of all closures $\overline{I}$, where $I$ ranges
over all finitely generated left ideals $I$ of $R$ contained in
$W$. Notice that if $\overline{I}\in\mathfrak{W}$ and $b\in W$,
then $\overline{I+Rb}\subseteq \overline{W}=W$. Since $R$
satisfies ACC on closed left ideals, we can pick a maximal element
$\overline{M}$ in $\mathfrak{W}$ and the  remark above yields
$W=\overline{M}$.

$(3)\Rightarrow (1)$. Let $L$ be a left ideal of $\A$ and
$\{L_s\}_{s\in S}$ its $S$-admissible set of left ideals of $R$.
By assumption, $\{L_s\}_{s\in S}$ is stable. Thus, by Proposition
\ref{characterization of stable sets},  there exist $k\in S$ and a
left ideal $W$ of $R$ such that $\pn{k}{L}=\A W$. Replacing $W$ by
$\overline{ W}$ we may additionally suppose that $W$ is closed.
Then, by assumption, there exist $b_1,\ldots, b_n\in R$ such that
$W=\overline{Rb_1+\ldots+Rb_n}$. Notice that $\A b_1+\ldots +\A
b_n\subseteq \pn{k}{L}=\A ( R\cap (\A b_1+\ldots+\A b_n))\subseteq
\A b_1+\ldots +\A b_n$. This shows that $\pn{k}{L}$ is a finitely
generated left ideal of $\A$. Since $\phi_k$ is an automorphism of
$\A$, $L$ is also finitely generated.
\end{proof}
The above theorem gives immediately:
\begin{cor} Suppose that   $R$ left noetherian. Then $\A$ is left noetherian  iff
  every $S$-admissible set of
 left ideals of $R$ is stable.
\end{cor}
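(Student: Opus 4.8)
The plan is to derive this corollary directly from Theorem \ref{thm noetherian}, observing that the left noetherian hypothesis on $R$ automatically supplies the ``closure-type'' conditions appearing in statements (2) and (3) of that theorem, so that stability becomes the only substantive requirement. In other words, I would not argue from scratch at all; the entire content is already packaged in the equivalence $(1)\Leftrightarrow(2)\Leftrightarrow(3)$, and the task reduces to checking which clauses are free once $R$ is assumed left noetherian.

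For the forward implication I would note that no hypothesis on $R$ is needed: if $\A$ is left noetherian, then the implication $(1)\Rightarrow(2)$ of Theorem \ref{thm noetherian} already asserts that every $S$-admissible set of left ideals of $R$ is stable. So one half of the biconditional is an immediate consequence of the theorem.

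For the converse I would assume $R$ left noetherian and every $S$-admissible set of left ideals of $R$ stable, and then verify the full condition (2) of Theorem \ref{thm noetherian}. Stability is given by hypothesis, so it remains only to produce ACC on closed left ideals. Since $R$ is left noetherian it satisfies ACC on all left ideals, and closed left ideals form a subclass of these, so ACC on closed left ideals is automatic. Thus both parts of (2) hold, and the implication $(2)\Rightarrow(1)$ yields that $\A$ is left noetherian. Alternatively one could invoke (3): a closed left ideal $W$ of $R$ is finitely generated, whence $W=\overline{W}$ is trivially the closure of the finitely generated left ideal $W$ itself, so the first clause of (3) holds as well.

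I do not anticipate any real obstacle here, since the one point needing verification---that a left noetherian ring has ACC on its closed left ideals---is immediate from the inclusion of closed left ideals among all left ideals. The corollary is therefore simply the specialization of Theorem \ref{thm noetherian} obtained by feeding in left noetherianity of $R$, which collapses conditions (2) and (3) down to the stability clause alone.
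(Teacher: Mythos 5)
Your proposal is correct and matches the paper exactly: the paper derives this corollary as an immediate specialization of Theorem \ref{thm noetherian}, with the only (trivial) observation being that a left noetherian ring automatically has ACC on closed left ideals, so condition (2) collapses to the stability clause. Your verification of that point, and the alternative check via condition (3), are both sound.
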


The equivalence $(1)\Leftrightarrow (2)$ in Theorem \ref{thm noetherian} is a generalization of Theorem 5.6 \cite{Jo} from the case when the monoid  $S$ is cyclic to the case when $S$ is a cancellative monoid satisfying the left Ore condition. The idea of the presented proof is completely different from the one used in \cite{Jo}.

 It is know  that there exist rings $R$ such that only one of $R$ and $\A$ is  left noetherian.
 The following example, which offers such rings,  is a  variation of  examples from \cite{Jo}.
\begin{ex}1. Let $\si$ be the endomorphism of the  polynomial ring $ \mathbb{Z}[x]$ given by $\si(x)=2x$. One can check that $A(\mathbb{Z}[x];\langle\si\rangle)=\mathbb{Z}+\mathbb{Z}[\frac{1}{2}][x]x$ is not noetherian.\\
 2. Let $A$ denote the field of rational functions in the set $\{x_i\}_{i\in\mathbb{Z}}  $ of indeterminates over a field $F$ and $\si$ be the $F$-endomorphism of $R=F(x_i\mid i\leq 0)[x_i\mid i> 0]$ given by $\si(x_i)=x_{i-1}$, for $i\in \mathbb{Z}$. Then $R$ is not noetherian and $A=A(R;\langle\si\rangle)$ is a field.
\end{ex}

The following theorem offers necessary and sufficient conditions for $\A$ to be left principal ideal ring.
\begin{thm}\label{A is PLIR}
 For the $CJ$-extension $\A$  of $R$ the following conditions are
equivalent:
\begin{enumerate}
  \item Every left ideal of $\A$ is principal;
  \item Every     $S$-admissible set  $\{L_s\}_{s\in S}$ of left ideals of
  $R$ satisfies the following conditions:\\
  (a)  $\{L_s\}_{s\in S}$ is stable,\\
  (b) There exist $t\in S$ and $b\in R$ such that
  $L_t=\overline{Rb}$.
 \end{enumerate}
  \end{thm}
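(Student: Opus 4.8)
The plan is to prove the equivalence by leveraging the correspondence machinery established in Section 1, especially Proposition \ref{characterization of stable sets} and Corollary \ref{cor stable ideals}, together with the notion of closure. Since a principal left ideal is in particular finitely generated, the direction $(1)\Rightarrow(2a)$ is immediate from Corollary \ref{cor stable ideals}: every $S$-admissible set associated to a (principal, hence finitely generated) left ideal of $\A$ is stable. So the real content of $(1)\Rightarrow(2)$ lies in producing condition (2b), and the content of $(2)\Rightarrow(1)$ lies in converting the data $(2a),(2b)$ into an actual single generator for $L$.

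For $(1)\Rightarrow(2)$, I would start with an arbitrary $S$-admissible set $\{L_s\}_{s\in S}$ and let $L=\Delta(\{L_s\}_{s\in S})$ be its associated principal left ideal of $\A$, say $L=\A a$. By stability and Proposition \ref{characterization of stable sets}(4), there is $k\in S$ with $\pn{k}{L}=\A W$ for some left ideal $W$ of $R$; moreover, as in the proof of $(4)\Rightarrow(1)$ for the noetherian theorem, one may take $W=\pn{k}{L}\cap R=L_k$ and note $L_k$ is closed (it is a member of an $S$-admissible set, hence closed by Proposition \ref{properies of closed}(1)). Now $\pn{k}{L}=\A\pn{k}{a}$ is itself principal. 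Choosing $t\in S$ with $\pn{t}{a}=b\in R$, I expect $L_t=\pn{t}{L}\cap R=\A b\cap R$ to be closed and, crucially, to equal $\overline{Rb}$: indeed $\A b\cap R=\A(Rb)\cap R=\overline{Rb}$ since $\A b=\A Rb$. This gives (2b) with this particular $t$ and $b$, and (2a) is already in hand.

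For $(2)\Rightarrow(1)$, take any left ideal $L$ of $\A$ with associated $\{L_s\}_{s\in S}$. By (2a) and Proposition \ref{characterization of stable sets} there is $k\in S$ and a closed left ideal which I can arrange (after replacing by its closure) so that $\pn{k}{L}=\A\overline{L_k}=\A L_k$. The goal is to show $L$ is principal, and since $\phi_k$ is an automorphism of $\A$ it suffices to show $\pn{k}{L}$ is principal. Here I would invoke (2b): applied to the $S$-admissible set $\{\pn{sk}{L}\cap R\}_{s\in S}$ (which is associated to $\pn{k}{L}$ and is again $S$-admissible by Lemma \ref{X gives admisible}), there exist $t\in S$ and $b\in R$ with $(\pn{t}{\pn{k}{L}}\cap R)=\overline{Rb}$. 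The plan is then to show $\pn{tk}{L}=\A b$: since $\pn{tk}{L}$ is associated to the $S$-admissible set whose value at the identity is $\pn{tk}{L}\cap R=\overline{Rb}$, and $\A b=\A\overline{Rb}$ has the same closure, Proposition \ref{BMC} and the injectivity part of Proposition \ref{thm iso} should force $\pn{tk}{L}=\A b$ once stability lets us write $\pn{tk}{L}=\A(\pn{tk}{L}\cap R)$. Applying $\phi_{(tk)^{-1}}$ recovers $L=\A\pn{(tk)^{-1}}{b}$, a principal left ideal.

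The main obstacle I anticipate is the bookkeeping around which element $k$ or $t$ to use and ensuring the chosen $t$ in (2b) can be combined with the $k$ from stability: in $(2)\Rightarrow(1)$ one must apply (2b) not to $\{L_s\}_{s\in S}$ directly but to the shifted admissible set associated to $\pn{k}{L}$, and then verify that stability still guarantees $\pn{tk}{L}=\A(\pn{tk}{L}\cap R)$ so that having a single closed generator $\overline{Rb}$ at one index actually propagates to principality of the whole ideal of $\A$. The clean way to handle this is to reduce everything, via the automorphisms $\phi_k$, to the single identity index and then argue that $\pn{k}{L}=\A W$ with $W=\overline{Rb}=\overline{Rb}$ principal up to closure yields $\pn{k}{L}=\A b$, using that $\A\overline{Rb}=\A Rb=\A b$.
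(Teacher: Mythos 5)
Your proposal is correct and follows essentially the same route as the paper's proof: $(1)\Rightarrow(2)$ via Corollary \ref{cor stable ideals} plus the computation $L_t=\pn{t}{L}\cap R=\A b\cap R=\overline{Rb}$, and $(2)\Rightarrow(1)$ by using Proposition \ref{characterization of stable sets}(2) to get $k$ with $\pn{sk}{L}=\A L_{sk}$ for all $s$, applying (2b) to the shifted admissible set $\{L_{sk}\}_{s\in S}$ associated to $\pn{k}{L}$, and concluding $\pn{tk}{L}=\A L_{tk}=\A\overline{Rb}=\A b$. The only cosmetic difference is your detour through Proposition \ref{BMC} and the injectivity of the correspondence, which is unnecessary once stability gives $\pn{tk}{L}=\A(\pn{tk}{L}\cap R)$ directly, exactly as the paper argues with $t=lk$.
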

  \begin{proof}
  $(1)\Rightarrow (2)$.  Let $\{L_s\}_{s\in S}$ be an $S$-admissible
  set of left ideals of  $R$ and $L$ be its associated left ideal of $\A$.
   Since every left
  ideal of $\A$ is principal,
   Corollary \ref{cor stable ideals} implies that the property (a)
  holds.

  Let $a\in \A$ and $t\in S$ be such that $L=\A a$ and $ b=\pn{t}{a}\in
  R$. Then $L_t=\pn{t}{L}\cap R=\A b\cap R=\overline{Rb}$,
  i.e. the property (b) is satisfied.

$(2)\Rightarrow (1)$. Let $L$ be a left ideal of $\A$ and
$\{L_s\}_{s\in S}$ be its associated $S$-admissible
  set of left ideals of  $R$. By assumption, $\{L_s\}_{s\in S}$ is
  stable. Thus, applying   Proposition \ref{characterization of stable
  sets}(2),  we can pick $k\in S$ such that $\pn{sk}{L}=\A L_{sk}$,
  for any $s\in S$. Observe that
  $\{L_{sk}\}_{s\in S}=\{\pn{s}{\pn{k}{L}}\cap R\}_{s\in S}$ is an
  $S$-admissible set of left ideals associated to $\pn{k}{L}$.
  Therefore  we can apply  (2)(b) to $\{L_{sk}\}_{s\in S}$ and pick $l\in S$ and $b\in R$ such
  that $L_{lk}=\overline{Rb}$. Let us set $t=lk$. Using the above
  we have $\pn{t}{L}=\A L_t$ and
  $\A b\subseteq \A L_t =\A\overline{Rb}\subseteq \A b$. This
  shows that $\pn{t}{L}=\A b$ and proves that the left ideal $L=\A\p{s}{b}$ is
  principal.
  \end{proof}
\begin{rem} \label{rem assumptions}

1. It is not difficult to  prove that
 the condition  (2)(b) of the above theorem is equivalent to the
 condition that every closed left ideal $X$ of $R$ is of the form
 $X=\p{t}{\overline{Rb}}\cap R$, for suitable $t\in S$ and $b\in
 R$.\\
 2. Let us remark that the condition (2)(b) always holds, provided
 every closed left ideal is principal.
\end{rem}

Recall that a ring $R$ is left B\'ezout if every finitely generated left ideal of $R$ is principal.
\begin{prop}\label{prop. Bezout} For the $CJ$-extension $\A$  of $R$ the following conditions are
equivalent:
\begin{enumerate}
\item $\A$ is a left B\'ezout ring;
\item for every    $S$-admissible  set
 $\{L_s\}_{s\in S}$ associated to a finitely generated left
  ideal $L$ of $\A$,   there exist $t\in S$  and $b\in R$ such
  that $L_t=\overline{Rb}$.
\end{enumerate}
\end{prop}
\begin{proof}
 Let $L$ be a finitely generated left ideal of $\A$ and $\{L_s\}_{s\in
 S}$ its associated $S$-admissible set.

 If $\A$ is left B\'ezout, then $L$ is principal. Thus there is $t\in
 S$ and $b\in R$ such that $\pn{t}{L}=\A b$ and $L_t=\pn{t}{L}\cap R=\overline{R
 b}$. This shows that $(1)$ implies $(2)$.

 Suppose $(2)$ holds. Then, by Corollary \ref{cor stable
 ideals}, $\{L_s\}_{s\in S}$ is stable.

  Now one can complete the
 proof as in the proof of implication $(3)\Rightarrow(1)$ of
 Theorem \ref{A is PLIR}.
\end{proof}
Notice that the characterization obtained in the above proposition is not nice
 in the sense that the statement (2) is not expressed
in terms of properties of $R$ but $\A$ is involved. Anyway it has the following direct application:
\begin{cor}\label{cor Bezout}
Suppose that one of the following conditions is satisfied:\\
1. Every closed left ideal of $R$ is principal.\\
2. $R$ is left B\'ezout.\\
Then $\A$ is  a left B\'ezout ring.
\end{cor}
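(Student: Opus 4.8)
The plan is to verify, in each of the two cases, condition $(2)$ of Proposition \ref{prop. Bezout}: namely that for every $S$-admissible set $\{L_s\}_{s\in S}$ associated to a finitely generated left ideal $L$ of $\A$ one can produce $t\in S$ and $b\in R$ with $L_t=\overline{Rb}$. Throughout I would lean on the closure identity $\overline{Rb}=\A(Rb)\cap R=\A b\cap R$ (using $\A R=\A$), which is immediate from the description of the closure recalled just before Theorem \ref{thm noetherian}. Once condition $(2)$ is checked, Proposition \ref{prop. Bezout} delivers the conclusion directly.

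For case $2$, assume $R$ is left B\'ezout and let $L=\A a_1+\ldots+\A a_n$ be a finitely generated left ideal of $\A$. Exactly as in the proof of Corollary \ref{cor stable ideals}, I would choose $k\in S$ with $\pn{k}{a_i}=b_i\in R$ for all $i$, so that $\pn{k}{L}=\A W$ where $W=\sum_{i=1}^n Rb_i$ is a finitely generated left ideal of $R$. Since $R$ is left B\'ezout, $W=Rb$ for some $b\in R$, hence $\pn{k}{L}=\A W=\A b$. Intersecting with $R$ then gives $L_k=\pn{k}{L}\cap R=\A b\cap R=\overline{Rb}$, so condition $(2)$ holds with $t=k$.

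For case $1$, assume every closed left ideal of $R$ is principal. Here the finite-generation input is barely needed: by Proposition \ref{properies of closed}$(1)$ each member $L_s$ of any $S$-admissible set of left ideals is already a closed left ideal of $R$, so by hypothesis $L_t=Rb$ for some $b\in R$, with any fixed choice of $t$ (say $t=\mbox{id}$) working. Since $L_t$ is closed we get $\overline{Rb}=\overline{L_t}=L_t=Rb$, which again verifies condition $(2)$, and Proposition \ref{prop. Bezout} finishes the argument.

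I do not anticipate a genuine obstacle; the only point requiring care is the reduction in case $2$, where one must first rewrite $\pn{k}{L}$ literally as $\A W$ with $W$ a \emph{finitely generated} left ideal of $R$ before invoking the B\'ezout property. This is precisely the computation already performed in Corollary \ref{cor stable ideals}; after it, the single-generator collapse $\A W=\A b$ together with the closure identity does all the remaining work. The main thing to track carefully is which ideals are formed inside $R$ and which inside $\A$, so that the intersections $\,\cdot\,\cap R$ and the closures are taken at the correct stage.
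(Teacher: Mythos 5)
Your proposal is correct and follows essentially the same route as the paper: in both cases it verifies condition (2) of Proposition \ref{prop. Bezout}, with case 2 argued by the identical computation $\pn{k}{L}=\A W=\A b$ and $L_k=\A b\cap R=\overline{Rb}$, and case 1 argued by unpacking exactly the observation the paper cites as Remark \ref{rem assumptions}(2) (each $L_s$ is closed by Proposition \ref{properies of closed}(1), hence principal, hence equal to its own closure).
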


\begin{proof}
 Proposition \ref{prop. Bezout} and
 Remark \ref{rem assumptions}(2) give the thesis when (1) holds.

Suppose (2) holds. Let $L=\A a_1+\ldots \A a_n$ and $t\in S$ be
such that $b_i=\pn{t}{a_i}\in R$, $1\le i\le n$. By assumption,
there exists $b\in R$ such that $R b_1+\ldots R b_n=Rb$. Then
$L_t=\pn{t}{L}\cap R=\A b\cap R=\overline{Rb}$ and the thesis is a
consequence of Proposition \ref{prop. Bezout}.
\end{proof}

The following example offers  a principal ideal domain $R$ such that $\A$ is not noetherian.  Of course, by  Corollary \ref{cor Bezout}, $\A$ is left B\'ezout.
\begin{ex}\label{Bezout example}
 Let $A= K[x^\frac{1}{2^n}\mid  n\in \mathbb{N}]$, where $K$ is a field,  and $\si$
 be a $K$-linear automorphism of $A$ defined by $\si(x)=x^2$. Then
 the restriction of $\si$ to
 $R=K[x]$  is an endomorphism of $R$ and it is easy to check
  that $A$ is a $CJ$-extension of $R$ with respect to the action of $\si$. Notice that $A$  is
 not noetherian but it is B\'ezout, by the above corollary.
\end{ex}
In view of Theorem \ref{A is PLIR} and Proposition \ref{prop. Bezout}  it seems  interesting to know when all principal left ideals of $R$ are closed. We will concentrate on this problem till the end of the paper.
It is known (Cf. Lemma 1.16 and Theorem 2.24 of \cite{jm2}) that if $R$ is a semiprime left Goldie
ring,  then:\\
(i) every regular element $c$ of $R$ is regular in $\A$;\\
 (ii) $\A$ is   a semiprime left Goldie ring and
$Q(\A)=A(Q(R);S)$, where $Q(B)$ denotes the classical left
quotient ring of a left Goldie ring $B$.

Therefore both $Q(R)$ and $\A$ are over-rings of $R$ included in $A(Q(R);S)$.
Keeping the above notation we have:

\begin{prop}\label{basic} For a semiprime left Goldie ring $R$, the following conditions are equivalent:
\begin{enumerate}
  \item  $Q(R)\cap \A=R$;
  \item  $Rc=\overline{Rc}$ and $cR=\overline{cR}$, for every regular element $c\in
  R$;
  \item $cR=\overline{cR}$, for every regular element $c\in R$;
   \item  If $ca\in
  R$, then $a\in R$, provided  $a\in \A$ and $c\in R$ is
  regular.
\end{enumerate}\end{prop}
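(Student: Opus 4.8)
The plan is to prove the four conditions equivalent through the cycle $(1)\Rightarrow(2)\Rightarrow(3)\Rightarrow(4)\Rightarrow(1)$, relying throughout on the two facts recalled just before the statement: every regular $c\in R$ is regular, and hence (since $\A$ is semiprime left Goldie) invertible, in $Q(\A)$; and $Q(\A)=A(Q(R);S)$ contains $Q(R)$ as a subring. The latter guarantees that the inverse of $c$ computed in $Q(R)$ coincides with its inverse in $Q(\A)$, so I may write $c^{-1}$ unambiguously. The computational backbone is the identification of the closures involved: for regular $c\in R$ one has $\overline{Rc}=\A c\cap R$ and, using the right-handed notion of closure, $\overline{cR}=c\A\cap R$, both because $1\in R$ forces $\A R=R\A=\A$.

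First I would record the key reformulation. For a fixed regular $c$, the equality $cR=\overline{cR}=c\A\cap R$ is equivalent to the implication that $a\in\A$ and $ca\in R$ together force $a\in R$: every $x\in c\A\cap R$ has the shape $x=ca$ with $a\in\A$, and left-regularity of $c$ in $\A$ cancels to pin down $a$. Ranging over all regular $c$, this is exactly condition (4); symmetrically $Rc=\overline{Rc}$ encodes the right-sided cancellation. This immediately yields $(3)\Rightarrow(4)$ (and in fact $(3)\Leftrightarrow(4)$).

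The substance is $(1)\Rightarrow(2)$, where the quotient ring does the work. Given $x=c\alpha\in c\A\cap R$ with $\alpha\in\A$, I would pass to $Q(\A)$ and write $\alpha=c^{-1}x$; since $c^{-1}\in Q(R)$ and $x\in R$, this places $\alpha\in Q(R)\cap\A=R$, whence $x\in cR$ and $cR=\overline{cR}$. The identical argument applied to $\alpha c$ in place of $c\alpha$ gives $Rc=\overline{Rc}$, so both clauses of (2) hold. The step $(2)\Rightarrow(3)$ is trivial. To close the cycle with $(4)\Rightarrow(1)$, I take $q\in Q(R)\cap\A$, write $q=c^{-1}d$ with $c\in R$ regular and $d\in R$; then $cq=d\in R$ with $q\in\A$, so (4) forces $q\in R$, giving $Q(R)\cap\A\subseteq R$ (the reverse inclusion being clear).

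I do not anticipate a real obstacle: the argument is cancellation by regular elements, legitimate because regularity survives the passage from $R$ to $\A$. The only points demanding care are bookkeeping. I must keep the notation $c^{-1}$ unambiguous, which is exactly what $Q(R)\subseteq Q(\A)$ and uniqueness of inverses provide, and I must consistently use the right-handed closure for $\overline{cR}$ as opposed to the left-handed $\overline{Rc}$. Checking that $\overline{Rc}=\A c\cap R$ and $\overline{cR}=c\A\cap R$ from the bimodule definition of closeness is the one routine verification I would carry out explicitly.
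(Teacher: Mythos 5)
Your proposal is correct and follows essentially the same route as the paper: the same cycle $(1)\Rightarrow(2)\Rightarrow(3)\Rightarrow(4)\Rightarrow(1)$, with the identifications $\overline{Rc}=\A c\cap R$, $\overline{cR}=c\A\cap R$, inversion of $c$ in $Q(R)\subseteq Q(\A)$ for $(1)\Rightarrow(2)$ and $(4)\Rightarrow(1)$, and cancellation by the ($\A$-)regular element $c$ for $(3)\Rightarrow(4)$. The only cosmetic difference is that you make the equivalence $(3)\Leftrightarrow(4)$ and the closure computations explicit, which the paper leaves implicit.
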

 \begin{proof} Let $c\in R$ be a regular element.

 $(1)\Rightarrow (2)$
 Let $a\in\A$ be such that
$ac=r\in R$. Then $a=rc^{-1}\in Q(R)\cap \A=R$. This shows that
$\A c\cap R\subseteq Rc$ and implies that $Rc=\overline{Rc}$. A
similar argument works for showing that $cR=\overline{cR}$.

The implication  $(2)\Rightarrow (3)$ is a tautology.

$(3)\Rightarrow (4)$  Suppose $ca\in R$, where $a\in \A$.
By $(3)$ we have $cR=\overline{cR}=c\A\cap R$. thus there exists $r\in R$ such that $ca=cr$ and $a=r\in R$ follows,  as $c$ is regular in $\A$.

$(4)\Rightarrow (1)$ Let $r\in R$ be such that $c^{-1}r=a\in
Q(R)\cap \A$. The condition (4) gives   $a\in R$ and shows that
$Q(R)\cap \A=R$.
 \end{proof}

The statement (2) in the above proposition is left-right symmetric  thus, additionally assuming that the semiprime ring $R$ is also right Goldie, we can add to the proposition left versions of statements (3) and (4). However, as the following example shows, we can not do this when $R$ is not right Goldie.

\begin{ex} Let $D$ denote the   field
of fractions of the ring $K[x^\frac{1}{2^n}\mid  n\in \mathbb{N}]$
from Example \ref{Bezout example}  and $\sigma$ be a $K$-linear
automorphism of $D$ defined  by $\sigma(x)=x^2$. Let us consider
the skew polynomial ring of endomorphism type (with coefficients
written on the left) $A=D[t;\sigma]$. Then $\si$ can be extended
to an automorphism of $A$ by setting $\si(t)=t$. Let
$R=K(x)[t;\si]\subseteq A$. Then the restriction of $\si $
to $R$ is an endomorphism of $R$ and for any $w\in A$, there
exists $n\ge 1$ such that $\si^n(w)\in R$. This means that
$A=A(R;\langle\si\rangle)$, where $\langle\si\rangle$ denotes the
monoid generated by $\si$.

It is well known that $R$ is a left Ore domain which is not right
Ore.  Observe that  $t\sqrt{x}=xt\in R$, but $\sqrt{x}\not\in R$.
Thus, by Proposition \ref{basic}, $Q(R)\cap A\ne R$. In fact,  the left localization of $R$ with respect the left Ore
set consisting of all powers of $t$ is equal to $D[t,t^{-1},\si]$. Thus  $A\subseteq D[t,t^{-1},\si]
\subseteq Q(R)$.

We claim that $R$ satisfies the left version of statement
(4) from Proposition \ref{basic}. Let $0\ne c\in R$ and $a\in A$ be such that
$ac\in R$.  If  $a\not \in R$, then we
can choose such $a=\sum_{i=0}^na_it^i$ of minimal possible degree,
say $n$. Then, by the choice of $n$,  $a_n \not \in K(x)$. Then
also $a_n\si^n(c_m)\not \in K(x)$, where $c_m$ denotes the leading
coefficient of $c\in R=K(x)[t;\si]$ and then $ac\not \in R$, which is impossible. Thus $a$ has to belong to $ R$.
\end{ex}

 Observe that the ring from Example \ref{Bezout example} satisfies the
assumption of the following proposition.
\begin{prop}
 Suppose $R$ is a left Ore domain such that $Q(R)\cap \A=R$. For a left ideal  $L$ of $A$ the following
 conditions are equivalent:
\begin{enumerate}
  \item  $L$ is principal;
  \item  $\exists_{s\in S}\exists_{a\in R}$
  such that, for all $t\in S$,  $\phi_{ts}(L)\cap R=L_{ts}=R\phi_t(a)$.
\end{enumerate}
\begin{proof}
 $(1)\Rightarrow (2).$ Suppose $L=Ab$ and let $s\in S$ be such that $\phi_s(a)\in R$.
 Then $\phi_s(L)=A\phi_s(b)$. Set $a=\phi_s(b)$. Now the implication is a direct
 consequence of Proposition \ref{basic}.

 $(2)\Rightarrow (1).$ Let $s\in S$ and $a\in R$ be as in $(2)$.
 Then $\phi_s(L)_t=L_{ts}R\phi_t(a)$. This means
 that $\phi_s(L)=\bigcup_{t\in S}\phi_t^{-1}(R)a=\A a$. Thus
 $L=\A\phi_s^{-1}(a)$.
\end{proof}
\end{prop}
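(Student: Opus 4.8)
The plan is to move back and forth between the left ideal $L$ of $\A$ and its associated $S$-admissible set of left ideals of $R$, using the correspondence of Proposition \ref{thm iso} together with the identity supplied by Proposition \ref{basic}: under the hypothesis $Q(R)\cap\A=R$, every nonzero element $c\in R$ (which is regular, as $R$ is a domain) satisfies $\A c\cap R=\overline{Rc}=Rc$. The observation I would record first is that the $S$-admissible set associated to $\pn{s}{L}$ is $\{\pn{t}{\pn{s}{L}}\cap R\}_{t\in S}=\{\pn{ts}{L}\cap R\}_{t\in S}=\{L_{ts}\}_{t\in S}$, since $\phi_{ts}=\phi_t\phi_s$. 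Thus condition (2) says exactly that the admissible set attached to $\pn{s}{L}$ is $\{R\pn{t}{a}\}_{t\in S}$.

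For $(1)\Rightarrow(2)$, I would write $L=\A b$ and, using Definition \ref{ext}(2), choose $s\in S$ with $\pn{s}{b}\in R$; set $a=\pn{s}{b}$. As $\phi_s$ is an automorphism of $\A$, $\pn{s}{L}=\A a$, so for every $t\in S$ one has $\pn{ts}{L}=\pn{t}{\pn{s}{L}}=\A\pn{t}{a}$. Here $\pn{t}{a}\in R$, since $\phi_t$ restricts to an endomorphism of $R$, and if $a\neq 0$ then $\pn{t}{a}\neq 0$ is regular in the domain $R$; Proposition \ref{basic} then gives $L_{ts}=\A\pn{t}{a}\cap R=\overline{R\pn{t}{a}}=R\pn{t}{a}$, which is (2). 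If $a=0$ then $b=0$, hence $L=0$, and (2) holds trivially.

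For $(2)\Rightarrow(1)$, take $s,a$ as in (2). By the opening observation, $\pn{s}{L}$ is associated to $\{R\pn{t}{a}\}_{t\in S}$. On the other hand the principal ideal $\A a$ is associated to $\{\pn{t}{\A a}\cap R\}_{t\in S}=\{\A\pn{t}{a}\cap R\}_{t\in S}$, and the same appeal to Proposition \ref{basic} shows $\A\pn{t}{a}\cap R=\overline{R\pn{t}{a}}=R\pn{t}{a}$ (trivially if $a=0$). Hence $\pn{s}{L}$ and $\A a$ have the same associated $S$-admissible set, so by the injectivity of $\Gamma$ in Proposition \ref{thm iso} they coincide: $\pn{s}{L}=\A a$. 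Applying the automorphism $\phi_{s^{-1}}$ gives $L=\A\p{s}{a}$, which is principal.

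The computations here are genuinely routine; the one point that needs care is the index bookkeeping, namely that condition (2) describes the admissible set of $\pn{s}{L}$ and not of $L$ itself, and the recognition that the hypothesis on $R$ enters only through Proposition \ref{basic} to identify $\A\pn{t}{a}\cap R$ with $R\pn{t}{a}$ for regular $\pn{t}{a}$. Since every nonzero element of the domain $R$ is regular and a left Ore domain is semiprime left Goldie, Proposition \ref{basic} is indeed applicable, so I do not expect any substantive obstacle beyond keeping this identification and the indices straight.
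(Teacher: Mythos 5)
Your proof is correct. The direction $(1)\Rightarrow(2)$ is exactly the paper's argument (write $L=\A b$, pull $b$ into $R$ via some $s\in S$, set $a=\pn{s}{b}$, and apply Proposition \ref{basic} to identify $\A\pn{t}{a}\cap R$ with $R\pn{t}{a}$), with the degenerate case $a=0$ and the verification that a left Ore domain is semiprime left Goldie made explicit — points the paper leaves tacit. In $(2)\Rightarrow(1)$, however, you take a genuinely different route. The paper never touches Proposition \ref{basic} there: it reconstructs $\pn{s}{L}$ directly through the map $\Delta$ of Proposition \ref{thm iso}, namely $\pn{s}{L}=\bigcup_{t\in S}\p{t}{L_{ts}}=\bigcup_{t\in S}\p{t}{R\pn{t}{a}}=\bigcup_{t\in S}\p{t}{R}\,a=\A a$, whence $L=\A\p{s}{a}$. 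You instead compute the admissible set of the candidate ideal $\A a$, identify it with $\{R\pn{t}{a}\}_{t\in S}$ by a \emph{second} application of Proposition \ref{basic}, and conclude $\pn{s}{L}=\A a$ from the injectivity of $\Gamma$. Both arguments are valid, and yours has a pleasant symmetry with the forward direction; but the paper's union computation buys something yours does not: it shows that the implication $(2)\Rightarrow(1)$ holds for an arbitrary ring $R$ acted on by $S$, so the hypothesis $Q(R)\cap\A=R$ of the proposition is consumed only in the direction $(1)\Rightarrow(2)$, whereas your argument spends it in both directions.
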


\end{document}